\newtheorem{theorem}{Theorem}[section]
\newtheorem{proposition}[theorem]{Proposition}
\newtheorem{lemma}[theorem]{Lemma}
\newtheorem{corollary}[theorem]{Corollary}
\theoremstyle{remark}
\newtheorem{remark}[theorem]{Remark}
\renewenvironment{proof}{{\noindent\bf Proof.}}{\hfill $\Box$\par\vskip3mm}
\newcommand{\Hom}{{\rm Hom}}
\newcommand{\End}{{\rm End}}
\newcommand{\Aa}{\mathcal{A}}
\newcommand{\Cc}{\mathcal{C}}
\newcommand{\Gg}{\mathcal{G}}
\newcommand{\Ll}{\mathcal{L}}
\newcommand{\Mm}{\mathcal{M}}
\newcommand{\Tt}{\mathcal{T}}
\newcommand{\Ss}{\mathcal{S}}
\begin{document}
\title[Dickson Splitting for ${}_{C^*}\Mm$]{The Dickson Subcategory Splitting Conjecture for Pseudocompact Algebras}

\begin{abstract}
Let $A$ be a pseudocompact (or profinite) algebra, so $A=C^*$ where $C$ is a coalgebra. We show that the if the semiartinian part (the "Dickson" part) of every $A$-module $M$ splits off in $M$, then $A$ is semiartinian, giving thus a positive answer in the case of algebras arising as dual of coalgebras (pseudocompact algebras), to a well known conjecture of Faith.
\end{abstract}

\author[Iovanov, Nastasescu, Torrecillas]{Miodrag Cristian Iovanov$^1$, Constantin Nastasescu$^2$, Blas Torrecillas}
\thanks{2000 \textit{Mathematics Subject Classification}. Primary 16W30;
Secondary 13D30, 18E40, 16S90, 16W30, 16G99, (16L30)}
\thanks{$^1$ The author was partially supported by the contract nr. 24/28.09.07 with UEFISCU "Groups, quantum groups, corings and representation theory" of CNCIS, PN II (ID\_1002)}
\thanks{$^2$ The author was jointly supported by Grant 434/1.10.2007 of CNCSIS, PN II (ID\_1005) and by a Ramon y Cajal'project of the University of Almeria.}
\date{}
\keywords{Torsion Theory, Splitting, Coalgebra, Semiartinian, Dickson subcategory}
\maketitle

\section*{Introduction}

Let $A$ be a ring and $T$ be a torsion preradical. Then $A$ is said to have splitting property provided that $T(M)$, the torsion submodule of $M$, is a direct summand of $M$ for every $A$-module $M$. More generally, if $\Cc$ is a Grothendieck category and $\Aa$ is a subcategory of $\Cc$, then $\Aa$ is called closed if it is closed under subobjects, quotient objects and direct sums. To every such subcategory we can associate a preradical $t$ (also called torsion functor) by putting $t(M)=$the sum of all subobjects of $M$ that belong to $\Aa$. We say that $\Cc$ has the splitting property with respect to $\Aa$ if it has the splitting property with respect to $t$, that is, if $t(M)$ is a direct summand of $M$ for all $M$. The subcategory $\Aa$ is called localizing if $\Aa$ is closed and also closed under extensions. In the case of category of $A$ modules, the splitting property with respect to some closed subcategory is a classical problem which has been considered by many authors. In particular, the question of when the (classical) torsion part of an $A$ module splits off is a well known problem. J. Rotman has shown in \cite{Rot} that for a commutative domain all modules split if and only if $A$ is a field. I. Kaplansky proved in \cite{K1}, \cite{K2} that for a commutative integral domain $A$ the torsion part of every finitely generated $M$ module splits in $M$ if and only if $A$ is a Pr$\rm\ddot{u}$fer domain. While complete results have been obtained in the commutative case, the characterization of the noncommutative rings $A$ for which (a certain) torsion splits in every $A$ module (or in every finitely generated module) is still an open problem. \\
Another well studied problem is that of the singular splitting. Given a ring $A$ and an $A$-module $M$, denote $Z(M)=\{x\in M\mid Ann(x){\rm \,is\,an\,essential\,ideal\,of\,}A\}$. Then a module is called singular if $M=Z(M)$ and nonsingular if $Z(M)=0$. Then, a ring $A$ is said to have the (finitely generated) singular splitting property if $Z(M)$ splits in $M$ for all (finitely generated) modules $M$. A thorough study and complete results on this problem was carried out in the work of M.L. Teply; see (also) \cite{G}, \cite{FK}, \cite{FT}, \cite{T1}, \cite{T2} (for a detailed history on the singular splitting), \cite{T3}. \\
Given a ring $A$, the smallest closed subcategory of the category of left $A$-modules ${}_A{\mathcal M}$, containing all the simple $A$-modules, is obviously the category of semisimple $A$-modules. Then one can always consider another more suitable "canonical" subcategory , namely include all simple $A$-modules and consider the smallest localizing subcategory of ${}_A{\mathcal M}$ that contains all these simple modules (recall that a subcategory is called localizing if it is a closed subcategory and if it is closed under extensions). This category is called the Dickson subcategory of ${}_A{\mathcal M}$, and it is well known that it consists of all semiartinian modules \cite{D}. More generally, this construction can be done in any Grothendieck category ${\mathcal C}$. Thus one can consider the splitting with respect to this Dickson subcategory; if a ring has this splitting property, we will say it has the Dickson splitting property. A remarkable conjecture in ring theory asks the question whether if a ring $A$ has this splitting property, then does it necessarily follow that $A$ is semiartinian? Obviously the converse is trivially true. The answer to this question in general has turned out to be negative. In this respect, an example of J.H. Cozzens in \cite{C} shows that there is a ring $R$ (a ring of differential polynomials) that is not semisimple and, among other, has the property that any simple right $R$-module is injective (in fact it has a unique simple right module up to isomorphism) and is noetherian on both sides. Then, if $A=R^{op}$ then the Dickson subcategory of ${}_A{\mathcal M}$ coincides with the that of semisimple $A$-modules and the (left) Dickson splitting property obviously holds since then all semisimple modules are injective ($A$ is left noetherian). However, this ring is not semisimple and thus not (left) semiartinian.\\
Motivated by these facts, in this paper we consider the case when the ring $A$ is a pseudocompact algebra: an algebra which is a topological algebra with a basis of neighbourhoods of 0 consisting of ideals of $A$ of finite codimension and which is Hausdorff separate and complete. Equivalently, such an algebra is an inverse (pro-) limit of finite dimensional algebras, and thus they are also called profinite algebras and their theory extends and generalizes, in part, the theory of finite dimensional algebras. This class of algebras is one very intensely studied in the last 20 to 30 years; they are in fact the algebras that arise as a dual (convolution) algebra of a coalgebra $C$, and the theory of the representations of such algebras is well understood through the theory of corepresentations (comodules) of coalgebras. In fact, if $A=C^*$ for a coalgebra $C$, the category of pseudocompact left $A$-modules is dual to that of the left $C$-comodules; see \cite{DNR}, Chapter 1. The main result of the paper shows that the conjecture mentioned above holds for this class of algebras. The particular question of whether this holds for algebras that are dual of a coalgebra $C$ was also mentioned in \cite{NT}. As a direct and easy consequence, we re-obtain the main result from \cite{I1} and \cite{NT} stating that if a coalgebra $C$ has the property that the rational submodule of every left $C^*$-module $M$ splits off in $M$, then $C$ must be finite dimensional.\\
We extensively use the notations and language of \cite{DNR}; for general results on coalgebras and comodules, we also refer to the well known classical textbooks \cite{A} and \cite{S}. We first give some general results about a coalgebra $C$ for which the Dickson splitting property for $C^*$ holds. We show that such a coalgebra must be almost connected (must have finite dimensional coradical) and also that if a coalgebra $C$ has this property then any subcoalgebra $D$ of $C$ has this Dickson splitting property for $D^*$. In some special cases, such as when the Jacobson radical of $C^*$ is left finitely generated (an in particular when $C^*$ is left noetherian or when $C$ is an artinian right $C^*$-module) or when $C^*$ is a domain, then the Dickson splitting property implies that the coradical filtration of $C$ is finite, and consequently, in this case, $C^*$ is semiartinian, and moreover, it has finite Loewy length. For the general case, we first show the Dickson splitting property for $C^*$ implies $C^*$-semiartinian for colocal coalgebras (i.e. when $C^*$ is a local ring), and then treat the general case by using standard localization techniques, some general and some specific to coalgebras. The main proofs will include some extensions and generalizations of an old idea from abelian group theory and will make use of general facts from module theory but also of a number of techniques specific to coalgebra (and corepresentation) theory.


\section{General results}

For a vector space $V$ and a subspace $W\leq V$ denote by $W^{\perp}=\{f\in V^*\mid f(x)=0,\,\forall\,x\in W\}$ and for a subspace $X\leq V^*$ denote by $X^\perp=\{x\in V\mid f(x)=0,\,\forall\,f\in X\}$. Recall from \cite{DNR} that for any subspace $X$ of $C$, $X$ is a left coideal (or right coideal, or respectively subcoalgebra) if and only if $X^\perp$ is a left ideal (or right ideal, or respectively two-sided ideal) of $C^*$. Similarly, if $I$ a left (or right, or two-sided) ideal of $C^*$, then $I^\perp$ is a right coideal (or left coideal, or subcoalgebra) of $C$. Moreover, if $X<C$ is a left (or right $C$) subcomodule (coideal) of $C$ then there is an isomorphism of left $C^*$-modules $(C/X)^*\simeq X^\perp$. We consider the finite topology on $C^*$; recall that $(X^{\perp})^{\perp}=X$ for any subspace of $C$ and also for a subspace $X$ of $C^*$, $(X^\perp)^{\perp}=\overline{X}$, the closure of $X$. Consequently, $(X^\perp)^\perp=X$ if and only if $X$ is closed. \\
Throughout, $C$ will be a coalgebra and $\varepsilon$ will be the counit of the coalgebra. Also $J=J(A)$ will denote the Jacobson radical of $A=C^*$; then one has that $J=C_0^\perp$ and also $J^\perp=C_0$. Generally, for a left $A$-module $M$, $J(M)$ will denote the Jacobson radical of $M$; 


Let $\Ss$ be a system of representatives for the simple left comodules and for $S\in\Ss$, let $C_S$ be the coalgebra associated to $S$; then $C_S=\sum\limits_{T<C,T{\rm simple},T\simeq S}T$ is a finite dimensional coalgebra and $C_0=\bigoplus\limits_{S\in\Ss}C_S$ (see \cite{DNR}, proposition 2.5.3 and Chapter 3.1). Denote by $\mathcal T$ the torsion preradical associated to the Dickson subcategory of ${}_A\Mm$. Note that $A/J=C^*/C_0^\perp\simeq C_0^*=\prod\limits_{S\in\Ss}A_S$ where $A_S=C_S^*$ as left $A$-modules.

\begin{proposition}\label{p.0}
With the above notations, $\Sigma=\sum\limits_{S\in\Ss}A_S\subseteq A/J=\prod\limits_{S\in\Ss}A_S$ is the socle of $A/J$ and moreover, $\Sigma={\mathcal T}(A/J)$.
\end{proposition}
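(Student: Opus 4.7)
The plan is to prove the proposition in two stages: first (a) $\Sigma=\mathrm{soc}(A/J)$, then (b) the socle filtration does not climb past $\Sigma$, so that $\Sigma=\mathcal T(A/J)$. Throughout I use the identification $A/J\simeq C_0^*=\prod_{S\in\Ss}A_S$, together with the fact that each $A_S=C_S^*$ is a finite-dimensional simple Artinian algebra (since $C_S$ is a simple subcoalgebra of the cosemisimple coalgebra $C_0$), and that the $A_S$, viewed inside $\prod_T A_T$, are pairwise annihilating two-sided ideals.

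For (a), $\Sigma\subseteq\mathrm{soc}(A/J)$ holds because each $A_S$ is semisimple as a module over itself, and the $A$-action on it factors through $A\to A/J\to A_S$; hence $\Sigma=\bigoplus_S A_S$ is semisimple as a left $A$-module. For the reverse inclusion, take a simple submodule $m\subseteq A/J$ and a nonzero $x=(x_S)\in m$. Since each $A_T$ is a two-sided ideal of $A/J$, the set $A_T\cdot x$ is an $A$-submodule of $m$, hence equals $0$ or $m$ by simplicity. Choosing $T$ with $x_T\neq 0$ makes $1_T\cdot x$ nonzero, so $A_T\cdot x=m$ and $m\subseteq A_T$; if some second index $T'\neq T$ also satisfied $A_{T'}x=m$, we would get $m\subseteq A_T\cap A_{T'}=0$, a contradiction. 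Thus $m$ lies inside a single $A_T\subseteq\Sigma$, proving $\mathrm{soc}(A/J)\subseteq\Sigma$.

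For (b), $\Sigma\subseteq\mathcal T(A/J)$ is immediate as $\Sigma$ is semisimple. The reverse inclusion reduces to showing $\mathrm{soc}((A/J)/\Sigma)=0$: every nonzero semiartinian module has nonzero socle, so once $(A/J)/\Sigma$ is socle-free it contains no nonzero semiartinian submodule, forcing $\mathcal T(A/J)\subseteq\Sigma$. This socle vanishing is the main obstacle, because $(A/J)/\Sigma=\prod_S A_S/\bigoplus_S A_S$ is a sizeable quotient of a product and carries plenty of nonzero modules. My plan is to exploit the abundance of idempotents in $\prod_S A_S$. Suppose for contradiction that $\bar x=x+\Sigma$ generates a simple submodule $m$ of $(A/J)/\Sigma$. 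Then $U=\{S:x_S\neq 0\}$ is infinite, so I partition it $U=V\sqcup W$ into two infinite subsets and form the idempotent $e=(e_S)\in\prod_S A_S$ with $e_S=1_S$ for $S\in V$ and $e_S=0$ otherwise. The representative $ex$ of $e\bar x$ has infinite support $V$, so $e\bar x\neq 0$, and simplicity forces $(A/J)\cdot e\bar x=m$. On the other hand every element of the form $ae\bar x$ has representative $aex$ supported inside $V$, whereas $(1-e)\bar x\in m$ has representative $(1-e)x$ supported on $W$, which is disjoint from $V$ and infinite; so $(1-e)\bar x\notin(A/J)\cdot e\bar x$, contradicting $(A/J)\cdot e\bar x=m$. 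Hence $\mathrm{soc}((A/J)/\Sigma)=0$ and the proposition follows.
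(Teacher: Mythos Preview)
Your proof is correct and follows essentially the same strategy as the paper: both arguments show that $\Pi/\Sigma$ has no simple submodule by taking a putative generator, splitting its (infinite) support into two infinite pieces, and using the corresponding projection idempotent to exhibit a strictly intermediate submodule. The only cosmetic differences are that you work with the idempotent $e$ directly in $A/J$ (legitimate, since $J$ annihilates $A/J$) whereas the paper explicitly lifts it to $e_I\in C^*$, and that your separate argument in~(a) is redundant once~(b) is established.
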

\begin{proof}
For $x=(x_S)_{S\in\Ss}\in \Pi=\prod\limits_{S\in\Ss}A_S$ denote $supp(x)=\{S\in\Ss \mid x_S\neq 0\}$. Obviously, $\Sigma$ is a semisimple module. It is enough to see that $\Pi/\Sigma$ contains no simple submodules. Assume by contradiction that $(Ax+\Sigma)/\Sigma$ is a simple (left) submodule of $\Pi/\Sigma$; then obviously $supp(x)$ is infinite ($x\notin \Sigma$) and write $supp(x)=I\sqcup J$ a disjoint union with infinite $I$ and $J$. Take $X$ such that $X\oplus C_0=C$ and let $e_I$ be defined as $\varepsilon$ on $\bigoplus\limits_{S\in I}C_S$ and $0$ on $\bigoplus\limits_{S\in J}C_S\oplus X$; put $x_I=e_I\cdot x$. Since $A_S\simeq C_S^*$ as $A$-bimodules, for $x_S\in A_S$, by using the left $C$-comodule structure of $C_S$, since $\Delta:C_S\rightarrow C_S\otimes C_S$, $A_S=C_S^*$ has a right comultiplication $A_S\rightarrow A_S\otimes C_S\subseteq A_S\otimes C$. Then it is easy to see that $e_I\cdot x_S=0$ if $S\notin I$ and $e_I\cdot x_S=x_S$ when $S\in I$. This shows that $supp(x_I)=I$. We have an inclusion of modules
$$ \Sigma\subsetneq \Sigma+Ax_I\subsetneq \Sigma+Ax$$
The strict inclusions hold since $x_I\notin \Sigma$ ($supp(x_I)=I$ is infinite) and $x\notin \Sigma+Ax_I$ (otherwise, $x=\sigma+ax_I$, $\sigma \in \Sigma$, $a\in A$ so then $supp(x)\subseteq supp(\sigma)\cup supp(x_I)$ following that $J=supp(x)\setminus I\subseteq (supp(\sigma)\cup I)\setminus I \subseteq supp(\sigma)$ which is finite, a contradiction). This shows that $(\Sigma+Ax)/\Sigma$ is not simple and the proof is finished.
\end{proof}

\begin{corollary}\label{p.1}
If $C$ is a coalgebra such that ${\mathcal T}(M)$ is a direct summand of $M$ for every cyclic $A$-module $M$, then $\Ss$ is a finite set and $C_0$ is finite dimensional.
\end{corollary}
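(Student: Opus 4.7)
The plan is to apply the splitting hypothesis to the cyclic left $A$-module $A/J$ (cyclic as the quotient of $A$ by the left ideal $J$). By Proposition~\ref{p.0}, $\mathcal{T}(A/J) = \Sigma = \bigoplus_{S\in\Ss} A_S \subseteq \prod_{S\in\Ss} A_S = A/J$, and by hypothesis there is a decomposition $A/J = \Sigma \oplus N$ as left $A$-modules. The whole corollary will reduce to proving that $\Sigma$ is an essential submodule of $A/J$, for then $N\cap\Sigma = 0$ forces $N=0$, hence $\Sigma=A/J$, i.e. the direct sum $\bigoplus_S A_S$ coincides with the product $\prod_S A_S$, which is only possible when $\Ss$ is finite. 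Once $\Ss$ is known to be finite, $C_0=\bigoplus_{S\in\Ss}C_S$ is a finite direct sum of finite-dimensional coalgebras $C_S$, hence finite dimensional.

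The essentiality step is the substantive one, but the construction from the proof of Proposition~\ref{p.0} handles it. Given a nonzero $x=(x_S)_{S\in\Ss}\in\Pi=\prod_{S\in\Ss}A_S$, choose any $S_0\in \mathrm{supp}(x)$, fix a vector space complement $X$ with $C=C_0\oplus X$, and define $e_{S_0}\in A=C^*$ to be $\varepsilon$ on $C_{S_0}$ and $0$ on $\bigl(\bigoplus_{S\neq S_0} C_S\bigr)\oplus X$. The same calculation performed in the proof of Proposition~\ref{p.0}, using the right $C$-comodule structure on each $A_S\subseteq A_S\otimes C_S$, gives $e_{S_0}\cdot x = x_{S_0}\in A_{S_0}$, which is nonzero. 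Now $A_{S_0}=C_{S_0}^*$ is a finite-dimensional simple artinian algebra, and the $A$-action on $A_{S_0}$ factors through the projection $A\twoheadrightarrow A/J\twoheadrightarrow A_{S_0}$, so $A$-submodules of $A_{S_0}$ are precisely left ideals of $A_{S_0}$. Being semisimple artinian, $A_{S_0}$ contains a simple left ideal inside the nonzero submodule $Ax_{S_0}$, producing a simple $A$-submodule of $Ax\subseteq A/J$. Such a simple submodule lies in $\Sigma$, so $Ax\cap\Sigma\neq 0$, establishing the essentiality of $\Sigma$ in $A/J$.

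The main obstacle is conceptual rather than computational: one has to notice that the ``projector'' trick used in Proposition~\ref{p.0} to extract a single coordinate $x_{S_0}$ already proves that every cyclic submodule of $\Pi$ meets $\Sigma$, and that this is all that is needed once the splitting hypothesis is invoked on the cyclic module $A/J$ itself.
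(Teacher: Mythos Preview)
Your argument is correct, but it takes a different route from the paper's. The paper simply observes that a direct summand of a cyclic module is cyclic: since $A/J$ is cyclic and $\Sigma=\mathcal{T}(A/J)$ splits off, $\Sigma=\bigoplus_{S\in\Ss}A_S$ is itself cyclic; but any single generator of $\Sigma$ has finite support in this direct sum, so $\Ss$ must be finite. No essentiality is needed.

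Your approach instead shows that $\Sigma$ is essential in $\Pi=A/J$, forcing the complement $N$ to vanish and hence $\bigoplus_S A_S=\prod_S A_S$. This is a genuine extra fact, not contained in Proposition~\ref{p.0} (which only says $\Pi/\Sigma$ has no simple submodules, a weaker statement), and your projector argument proves it cleanly. Note, incidentally, that once you have $0\neq e_{S_0}\cdot x\in Ax\cap A_{S_0}\subseteq Ax\cap\Sigma$, essentiality is already established; the further step of locating a simple left ideal inside $Ax_{S_0}$ is not needed. The paper's route is shorter and more direct for this corollary, while yours yields the additional information that the socle of $A/J$ is essential, which could be of independent interest.
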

\begin{proof}
Since $A/J$ is cyclic and $\Sigma={\mathcal T}(A/J)$, $\Sigma=\bigoplus\limits_{S\in \Ss}A_S$ is a direct summand of $A/J$ and thus it is itself cyclic. This shows that $\Ss$ must be finite, and therefore $C_0=\bigoplus\limits_{S\in \Ss}C_S$ is finite dimensional. 
\end{proof}

\begin{proposition}\label{p.2}
If $C$ is a coalgebra such that $C^*$ has the Dickson splitting property for left modules and let $D$ be a subcoalgebra of $C$. Then $D^*$ has the Dickson splitting property for left modules too.
\end{proposition}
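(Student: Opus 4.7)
The plan is to reduce the claim directly to the hypothesis on $C^*$ by identifying $D^*$ with a quotient algebra of $C^*$. The inclusion of subcoalgebras $D\hookrightarrow C$ dualizes to the restriction map $\pi:C^*\to D^*$; this map is surjective and its kernel is exactly $D^\perp$, so $D^*\simeq C^*/D^\perp$ as algebras. Consequently every left $D^*$-module $N$ is canonically a left $C^*$-module on which $D^\perp$ acts as zero, and conversely any $C^*$-module killed by $D^\perp$ carries a unique $D^*$-module structure. For such an $N$, the $C^*$- and $D^*$-submodule lattices literally coincide, since $D^\perp$ already annihilates the whole of $N$ and hence every subset of it.

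Next, I would argue that the Dickson torsion parts of $N$ computed over the two algebras also coincide. Because $D^\perp$ annihilates every submodule of $N$, a submodule $L\subseteq N$ is simple over $C^*$ if and only if it is simple over $D^*$. Semiartinianness is an intrinsic property of the submodule lattice together with the identification of simple subquotients (equivalently, it is characterized by the existence of an ascending transfinite socle filtration exhausting the module), so $\mathcal{T}_{C^*}(N)=\mathcal{T}_{D^*}(N)$ as submodules of $N$.

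Finally, I would apply the Dickson splitting hypothesis for $C^*$ to $N$: there is a $C^*$-module decomposition $N=\mathcal{T}_{C^*}(N)\oplus N'$. The complement $N'$ is a $C^*$-submodule of $N$ and satisfies $D^\perp\cdot N'\subseteq D^\perp\cdot N=0$, hence is a $D^*$-submodule; the previous paragraph then promotes the decomposition to a $D^*$-module splitting $N=\mathcal{T}_{D^*}(N)\oplus N'$, which is exactly what is required. The argument is essentially formal once the identification $D^*\simeq C^*/D^\perp$ is in place; the only step that genuinely needs verification is the equality $\mathcal{T}_{C^*}(N)=\mathcal{T}_{D^*}(N)$, and this reduces to the routine observation that the notions of simple submodule and of semiartinianness are intrinsic to the underlying submodule lattice of a module annihilated by $D^\perp$.
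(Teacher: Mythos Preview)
Your proof is correct and follows essentially the same approach as the paper's: both identify $D^*\simeq C^*/D^\perp$, view a $D^*$-module $N$ as a $C^*$-module annihilated by $D^\perp$, observe that the submodule lattices (hence the notions of simple and semiartinian submodules) agree, and then transport the $C^*$-splitting $N=\mathcal T_{C^*}(N)\oplus N'$ to a $D^*$-splitting. The only cosmetic difference is that you first prove $\mathcal T_{C^*}(N)=\mathcal T_{D^*}(N)$ and then split, whereas the paper splits first and then checks that each summand has the right semiartinian behavior over $D^*$.
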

\begin{proof}
Let $M$ be a left $D^*$-module. Let $I=D^\perp$, so we have an exact sequence
$$0\rightarrow I\rightarrow C^*\rightarrow D^*\rightarrow 0$$
Then $M$ is a left $C^*$-module through the restriction morphism $C^*\rightarrow D^*$ such that $I\subseteq Ann(M)$. Then there is a decomposition $M=\Sigma\oplus X$ where $\Sigma$ is the semiartinian part of $M$ as a $C^*$-module, and $X$ is a $C^*$-submodule of $M$. But $IM=0$ and therefore $I$ also annihilates both $\Sigma$ and $X$, hence $\Sigma$ and $X$ are also $D^*$-modules. Now note that if $S$ is a $D^*$-module, then the lattice of $C^*$-submodules of $S$ coincides to that of the $D^*$-submodules since $S$ is annihilated by $I$. This shows that $S$ is semiartinian (or has no semiartinian submodule) as $D^*$-module if and only if it is semiartinian as a $C^*$-module (respectively has no semiartinian submodule). Therefore, $\Sigma$ is semiartinian as $D^*$-module (as it is a semiartinian $C^*$-module) and $X$ contains no simple $D^*$-submodule (since $X$ contains no simple $C^*$-submodules), hence $\Sigma$ is the semiartinian part of $M$ also as a $D^*$-module and it splits in $M$.
\end{proof}

\subsection{Some general module facts}

We dedicate a short study for a general property of modules which is obtained with a "localization procedure", that will be used towards our main result. Although this follows in a much more general setting in Grothendieck categories (\cite{CICN}), we leave the general case treated there aside, and present a short adapted version here. We will often use the following 

\begin{remark}\label{r.J}
If $A/J$ is a semisimple algebra and $M$ is a left $A$-module, then $M$ is semisimple if and only if $J\cdot M=0$. Moreover, then for any left $A$-module $M$ we have $J(M)=JM$. Indeed, $M/J(M)\subseteq \prod\limits_{X<M,X{\,\rm maximal}}M/X$ which is canceled by $J$ and thus it is semisimple; therefore $J(M/J(M))=0$ i.e. $JM\subseteq J(M)$. Conversely, since $M/JM$ is semisimple, $JM$ is an intersection of maximal submodules of $M$, so $J(M)\subseteq JM$.
\end{remark}

To the end of this section, let $A$ be a ring, $e$ an idempotent of $A$. The functor $\Tt_e=eA\otimes_A-:{}_A\Mm\longrightarrow{}_{eAe}\Mm$ is exact and has $\Gg_e=\Hom_{eAe}(eA,-)$ as a right adjoint; in fact, $eA\otimes_AM\simeq eM$ as left $eAe$-modules for any left $A$-module $M$. Recall that for $N\in{}_{eAe}\Mm$, the left $A$-module structure on $\Hom_{eAe}(eA,N)$ is given by $(a\cdot f)(x)=f(xa)$, for $a\in A$, $x\in eA$, $f\in\Hom_{eAe}(eA,N)$. Let $\psi_{e,M}:M\rightarrow \Hom_{eAe}(eA,eM)$ be the canonical morphism (the unit of this adjunction); it is given by $\psi_{e,M}(m)(ea)=eam$ for $m\in M$, $a\in A$. The following proposition actually says that the counit of this adjunction is an isomorphism; the proof is a straightforward computation and is omitted.

\begin{proposition}\label{mod.1}
Let $N\in{}_{eAe}\Mm$ and for $n\in N$ let $\chi_n\in\Hom_{eAe}(eA,N)$ be such that $\chi_n(ea)=eae\cdot n$. Then the application
$$N\ni n\longmapsto e\cdot \chi_n\in e\cdot\Hom_{eAe}(eA,N)$$
is an isomorphism of left $eAe$-modules.
\end{proposition}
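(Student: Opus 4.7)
The plan is to exhibit an explicit two-sided inverse to the map $\eta : n \mapsto e \cdot \chi_n$. The natural candidate is evaluation at $e$, namely $\phi : e \cdot \Hom_{eAe}(eA,N) \to N$, $\phi(f) = f(e)$; this takes values in $N$, and the verifications below use repeatedly that $N$ is a unital $eAe$-module, so that $e \cdot n = n$ for every $n \in N$.

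The key preliminary identity to establish is that for any $f \in \Hom_{eAe}(eA,N)$,
$$
(e \cdot f)(ea) = f(eae) = f(eae \cdot e) = eae \cdot f(e) = \chi_{f(e)}(ea),
$$
so $e \cdot f = \chi_{f(e)}$; this uses only the given $A$-action and the $eAe$-linearity of $f$. Two consequences are immediate. First, applied to $f = \chi_n$ (with $\chi_n(e) = e \cdot n = n$), it gives $e \cdot \chi_n = \chi_n$, so $\eta$ really does take values in $e \cdot \Hom_{eAe}(eA,N)$. Second, every element of $e \cdot \Hom_{eAe}(eA,N)$, being of the form $e \cdot g$, equals $\chi_{g(e)}$; in particular $\eta$ is surjective.

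Next I would check $eAe$-linearity. For $c = eae \in eAe$ and $n \in N$, one has
$$
(c \cdot \chi_n)(ea') = \chi_n(ea' \cdot eae) = (ea'eae)e \cdot n = ea'eae \cdot n,
$$
while $\chi_{cn}(ea') = ea'e \cdot (cn) = (ea'e)(eae) \cdot n = ea'eae \cdot n$, so $\eta$ is $eAe$-linear; linearity of $\phi$ is an analogous one-line check using the $eAe$-linearity of $f$.

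Finally, that the two maps are mutually inverse follows almost immediately from the preliminary identity. In one direction, $\phi(\eta(n)) = (e \cdot \chi_n)(e) = \chi_n(e) = n$. In the other, given $f \in e \cdot \Hom_{eAe}(eA,N)$, write $f = e \cdot g$; the preliminary identity yields $f = \chi_{g(e)}$ with $f(e) = g(e)$, and therefore $\eta(\phi(f)) = e \cdot \chi_{f(e)} = \chi_{f(e)} = \chi_{g(e)} = f$. There is no real obstacle here: the content is simply an unpacking of the counit of the adjunction $\Tt_e \dashv \Gg_e$ under the standard identification $eA \otimes_A \Hom_{eAe}(eA,N) \simeq e \cdot \Hom_{eAe}(eA,N)$, which is exactly why the author calls it a straightforward computation.
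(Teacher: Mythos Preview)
Your proof is correct and complete. The paper omits the proof entirely, stating only that ``the proof is a straightforward computation and is omitted''; your explicit verification via the evaluation-at-$e$ inverse and the key identity $e\cdot f=\chi_{f(e)}$ is precisely the kind of computation the authors have in mind, and your closing remark about the counit of the adjunction $\Tt_e\dashv\Gg_e$ matches the paper's own framing of the statement.
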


\begin{proposition}\label{mod.2}
Let $N$ be a left $eAe$-module and $X$ an $A$-submodule of $\Hom_{eAe}(eA,N)$. Denote $X(e)=\{f(e)\mid f\in X\}$. Then $X(e)$ is a submodule of $N$, and $X(e)\neq 0$ and $e\cdot X\neq 0$, provided that $X\neq 0$.
\end{proposition}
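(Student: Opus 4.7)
The plan is straightforward once we unwind the left $A$-module structure on $\Hom_{eAe}(eA,N)$ given by $(a\cdot f)(x)=f(xa)$, and exploit the idempotency of $e$. The proof splits into three short verifications: (i) $X(e)$ is closed under the $eAe$-action; (ii) $X(e)\neq 0$ when $X\neq 0$; (iii) $e\cdot X\neq 0$ when $X\neq 0$.

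For (i), fix $f\in X$ and $r\in eAe$. Since $f$ is $eAe$-linear and $er=r$, I would compute $r\cdot f(e)=f(re)=f(r)=f(e\cdot r)=(r\cdot f)(e)$; because $X$ is an $A$-submodule, $r\cdot f\in X$, so $r\cdot f(e)\in X(e)$. Additive closure is immediate, so $X(e)$ is an $eAe$-submodule of $N$.

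For (ii), pick $f\in X$ with $f\neq 0$, so there exists $a\in A$ with $f(ea)\neq 0$. The element $g:=a\cdot f$ lies in $X$ and satisfies $g(e)=(a\cdot f)(e)=f(ea)\neq 0$, so $0\neq g(e)\in X(e)$. For (iii), the same element $g$ works: using the $A$-action and $e^2=e$,
\[
(e\cdot g)(e)=g(e\cdot e)=g(e)\neq 0,
\]
so $e\cdot g$ is a nonzero element of $e\cdot X$.

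There is really no serious obstacle here; the entire content is the bookkeeping identity $(a\cdot f)(e)=f(ea)$ and the fact that idempotency $e^2=e$ turns evaluation at $e$ into a kind of "restriction to $eAe$" operation that plays well with the $A$-action. The only small subtlety worth flagging is that if one tried to prove $e\cdot X\neq 0$ directly from an arbitrary nonzero $f\in X$, it could happen that $f$ vanishes on $eAe$ and hence $e\cdot f=0$; the fix, which the proof above uses implicitly, is to first translate the nonvanishing to $g=a\cdot f$ for which $g(e)\neq 0$, after which $e\cdot g\neq 0$ is automatic.
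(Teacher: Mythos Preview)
Your proof is correct and follows essentially the same route as the paper's: both use the identity $(a\cdot f)(e)=f(ea)$ together with $e^2=e$ to handle (ii) and (iii), and for (i) the paper writes the generic element of $eAe$ as $eae$ and shows $eae\cdot f(e)=f(eae)=(ae\cdot f)(e)\in X(e)$, which is exactly your computation with $r=eae$. The small remark you make at the end about needing to pass from $f$ to $g=a\cdot f$ before multiplying by $e$ is also implicit in the paper's argument.
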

\begin{proof}
Let $x=f(e)\in X(e)$, $f\in X$ and $a\in A$. Then $eaex=eae\cdot f(e)=f(eae)=(ae\cdot f)(e)\in X(e)$ since $ae\cdot f\in X$. Moreover, if $f\neq 0$, then $f(ea)\neq 0$ for some $a$ and as above $0\neq f(ea)=(a\cdot f)(e)\in X(e)$; also $(e\cdot (a\cdot f))(e)=(a\cdot f)(e)=f(ea)\neq 0$, so $0\neq e\cdot (a\cdot f)\in e\cdot X$.
\end{proof}

\begin{proposition}\label{mod.3}
If $N\in{}_{eAe}\Mm$ has essential socle, then $\Gg_e(N)$ has essential socle too (as an $A$-module).
\end{proposition}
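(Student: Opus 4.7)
The plan is to show that every nonzero $A$-submodule $X\subseteq\Gg_e(N)$ contains a simple $A$-submodule. First, Proposition~\ref{mod.2} gives that $X(e)=\{f(e):f\in X\}$ is a nonzero $eAe$-submodule of $N$. Since $N$ has essential socle, $X(e)$ then contains a simple $eAe$-submodule $S$, and I pick $f\in X$ with $s:=f(e)$ a nonzero element of $S$.

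Next, using the $eAe$-linearity of $f$, I would verify that $ef=\chi_s$ as elements of $\Gg_e(N)$, with $\chi_s$ as in Proposition~\ref{mod.1}: for every $a\in A$,
$$(ef)(ea)=f(eae)=eae\cdot f(e)=eae\cdot s=\chi_s(ea).$$
Since $X$ is $A$-stable and $e\in A$, this puts $\chi_s=ef$ inside $X$. Thus the task reduces to the claim that $A\chi_s$ is a simple $A$-submodule of $\Gg_e(N)$ whenever $s$ is a nonzero element of a simple $eAe$-submodule $S$ of $N$.

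This is the main step. Let $Y$ be a nonzero $A$-submodule of $A\chi_s$. By Proposition~\ref{mod.2} applied to $Y\subseteq\Gg_e(N)$, together with the identification of Proposition~\ref{mod.1}, $Y(e)$ is a nonzero $eAe$-submodule of $S$; by simplicity of $S$, $Y(e)=S$, so there is $y\in Y$ with $y(e)=s$. The same computation then yields $ey=\chi_{y(e)}=\chi_s$, and $ey\in Y$ because $Y$ is $A$-stable, so $\chi_s\in Y$ and therefore $Y=A\chi_s$. The main obstacle I anticipate is exactly this passage from a simple $eAe$-submodule of $N$ to a simple $A$-submodule of $\Gg_e(N)$; it is dissolved by the observation that multiplication by $e$ collapses any $y\in\Gg_e(N)$ to the canonical element $\chi_{y(e)}$ of Proposition~\ref{mod.1}, forcing every $A$-submodule that contains some $y$ with $y(e)=s$ automatically to contain the generator $\chi_s$.
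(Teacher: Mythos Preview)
Your argument is correct and complete. The key identity $ey=\chi_{y(e)}$ for any $y\in\Gg_e(N)$ is exactly right, and it immediately forces every nonzero $A$-submodule $Y\subseteq A\chi_s$ (since $Y(e)\subseteq (A\chi_s)(e)=eAe\cdot s=S$ and $Y(e)\neq 0$) to contain $\chi_s$, so $A\chi_s$ is simple.

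Your route differs genuinely from the paper's. The paper does not show directly that $A\chi_s$ is simple. Instead, it works inside $\Gg_e(\Sigma_0)$ (your $\Gg_e(S)$), sets $S'=\bigcap_{0\neq X<\Gg_e(\Sigma_0)}X$, and argues indirectly that $S'\neq 0$: for each nonzero $X$ one has $eX\neq 0$ by Proposition~\ref{mod.2} and $e\Gg_e(\Sigma_0)\simeq\Sigma_0$ by Proposition~\ref{mod.1}, so $e(\Gg_e(\Sigma_0)/X)=0$; then exactness of $\Tt_e$ applied to $0\to S'\to\Gg_e(\Sigma_0)\to\prod_X\Gg_e(\Sigma_0)/X$ shows $S'\neq 0$ (and $S'$ is simple by construction). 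Finally the paper observes $S'\subseteq A\cdot eh\subseteq H$. Your approach bypasses the product-of-quotients argument and the appeal to exactness of $\Tt_e$: the single computation $ey=\chi_{y(e)}$ produces the simple submodule explicitly as $A\chi_s$ and puts it inside $X$ in one stroke. This is shorter and more elementary; the paper's version, on the other hand, makes the role of the adjunction $(\Tt_e,\Gg_e)$ more visible and in fact shows that $\Gg_e(\Sigma_0)$ has a \emph{unique} simple submodule (its socle is simple), a slightly finer statement that your computation also yields once one notes that your argument applies verbatim to any nonzero $Y\subseteq\Gg_e(S)$, not just $Y\subseteq A\chi_s$.
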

\begin{proof}
Let $0\neq H<\Gg_e(N)$ be a submodule of $\Gg_e(N)$ (assume $\Gg_e(N)\neq 0$). Then $H(e)\neq 0$ by Proposition \ref{mod.2} and $H(e)\cap s(N)\neq 0$. Let $\Sigma_0$ be a simple $eAe$-submodule of $H(e)$. We have an exact sequence 
$$0\rightarrow S\rightarrow \Gg_e(\Sigma_0)\rightarrow \prod\limits_{0\neq X<\Gg_e(\Sigma_0)}\frac{\Gg_e(\Sigma_0)}{X}$$
where $S=\bigcap\limits_{0\neq X<\Gg_e(\Sigma_0)}X$. Since $\Tt_e$ is exact, we have $e(\Gg_e(\Sigma_0)/X)\simeq e\Gg_e(\Sigma_0)/eX=0$ because $e\Gg_e(\Sigma_0)\simeq \Sigma_0$ by Proposition \ref{mod.1}, $\Sigma_0$ is simple and $eX\neq 0$ by Proposition \ref{mod.2}. Then, it easily follows that 
$$e\cdot (\prod\limits_{0\neq X<\Gg_e(\Sigma_0)}\frac{\Gg_e(\Sigma_0)}{X})=\Tt_e(\prod\limits_{0\neq X<\Gg_e(\Sigma_0)}\frac{\Gg_e(\Sigma_0)}{X})=0$$
and then by the above exact sequence and the exactness of $\Tt_e$ we get $S\neq 0$ (otherwise, if $S=0$, it follows that $\Gg_e(\Sigma_0)=0$, so $\Sigma_0\simeq e\Gg_e(\Sigma_0)=0$, a contradiction). Also, $S$ is simple by construction. Let $0\neq x\in \Sigma_0\subseteq H(e)$, $x=h(e)$, $h\in H$. Since there is a monomorphism $0\rightarrow \Gg_e(\Sigma_0)\rightarrow \Gg_e(N)$ and $0\neq eh$ has image contained in $\Sigma_0$ ($eh(ea)=h(eae)=eae\cdot h(e)\in \Sigma_0$ and $eh(e)=h(e)\neq 0$), we observe that $S\subseteq A\cdot eh$, by the construction of $S$. But $Aeh\subseteq H$, and thus $H$ contains the simple $A$-submodule $S$.
\end{proof}

\begin{theorem}\label{1.semiartinian}
Assume $A=\bigoplus\limits_{i\in F}E_i$ as left $A$-modules, and let $E_i=Ae_i$ with orthogonal idempotents $e_i$ with $\sum\limits_{i\in F}e_i=1$. Let $M$ be an $A$-module such that $e_iM=\Tt_{e_i}(M)$ is a semiartinian $e_iAe_i$-module for all $i\in F$. Then $M$ is semiartinian too. Consequently, if $e_iAe_i$ is a left semiartinian ring for all $i\in F$, then $A$ is left semiartinian too.
\end{theorem}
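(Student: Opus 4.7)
The plan is to show that every nonzero $A$-submodule $N\subseteq M$ contains a simple $A$-submodule, which characterizes $M$ as semiartinian. The argument combines the adjunction $\Tt_{e_i}\dashv\Gg_{e_i}$ with Proposition \ref{mod.3} and an iteration that ``kills off'' one component $i\in F$ at a time; note that the decomposition $A=\bigoplus_{i\in F}Ae_i$ with $\sum e_i=1$ forces $F$ to be finite.

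Set $N_0=N$. Recursively, whenever $N_k\neq 0$, pick $i_k\in F$ with $\Tt_{e_{i_k}}(N_k)=e_{i_k}N_k\neq 0$ (such $i_k$ exists since $N_k=\sum_j e_jN_k$), form the unit of adjunction $\psi_k\colon N_k\to\Gg_{e_{i_k}}(\Tt_{e_{i_k}}(N_k))$, $\psi_k(x)(e_{i_k}a)=e_{i_k}ax$, and put $N_{k+1}=\Ker\psi_k$. Observe that $\Tt_{e_{i_k}}(N_k)\subseteq e_{i_k}M$ is a submodule of a semiartinian $e_{i_k}Ae_{i_k}$-module, hence itself semiartinian and in particular with essential socle.

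Three properties drive the iteration. First, $\psi_k\neq 0$ because $e_{i_k}N_k\neq 0$, so $N_{k+1}\subsetneq N_k$. Second, by Proposition \ref{mod.1} the counit of the adjunction is an isomorphism, so the triangle identity makes $\Tt_{e_{i_k}}(\psi_k)$ injective and, by exactness of $\Tt_{e_{i_k}}$, $\Tt_{e_{i_k}}(N_{k+1})=0$. Third, any $j\in F$ with $\Tt_{e_j}(N_k)=0$ also satisfies $\Tt_{e_j}(N_{k+1})=0$, since $N_{k+1}\subseteq N_k$. Hence the finite support $S_k=\{j\in F:\Tt_{e_j}(N_k)\neq 0\}$ strictly decreases at each step (as $i_k\in S_k\setminus S_{k+1}$), and since $F$ is finite there is a least $k^\ast$ with $N_{k^\ast}=0$.

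For this $k^\ast$ the map $\psi_{k^\ast-1}$ must be injective, so $N_{k^\ast-1}$ embeds into $\Gg_{e_{i_{k^\ast-1}}}(\Tt_{e_{i_{k^\ast-1}}}(N_{k^\ast-1}))$. By Proposition \ref{mod.3} the target has essential socle as an $A$-module, and so does its nonzero submodule $N_{k^\ast-1}$; thus $N_{k^\ast-1}\subseteq N$ contains a simple $A$-submodule. For the second assertion I would apply the first part with $M=A$: if each $e_iAe_i$ is left semiartinian, then every left $e_iAe_i$-module is semiartinian (as a quotient of a free module), so in particular $e_iA=\Tt_{e_i}(A)$ is semiartinian for each $i$, whence $A$ is left semiartinian. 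The main obstacle is that the naive one-step approach --- trying to use $N\hookrightarrow\Gg_{e_i}(e_iN)$ directly --- only succeeds when the unit is injective, which is not automatic; the iteration on successive kernels, combined with the finite-support bookkeeping, is precisely what overcomes this.
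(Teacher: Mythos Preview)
Your iteration is correct, but the opening reduction is not: showing that every nonzero \emph{submodule} $N\subseteq M$ contains a simple $A$-submodule only proves that $M$ has essential socle, not that $M$ is semiartinian. Semiartinian means every nonzero \emph{quotient} of $M$ has nonzero socle (equivalently, the ascending Loewy series exhausts $M$); an injective envelope of a simple module typically has essential socle without being semiartinian. The fix is easy and uses exactly what the paper uses: the hypothesis passes to quotients because $e_i(M/N)\simeq e_iM/e_iN$ is a quotient of a semiartinian $e_iAe_i$-module, hence semiartinian. So run your iteration with $N_0=M/N$ (or, equivalently, prove once that any nonzero module satisfying the hypothesis has a simple submodule, then apply this to each quotient $M/N$). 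With that correction your argument goes through.

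On the method itself, the paper takes a shorter route that sidesteps the iteration. Instead of attacking one index at a time and tracking the shrinking support $S_k$, it maps $M$ into all the $\Gg_{e_i}(e_iM)$ simultaneously via
\[
M\longrightarrow \overline{M}=\bigoplus_{i\in F}\Gg_{e_i}(e_iM),\qquad m\longmapsto (\psi_{e_i,M}(m))_{i\in F},
\]
and observes this is injective precisely because $\sum_i e_i=1$ (if all $\psi_{e_i,M}(m)=0$ then $e_im=0$ for all $i$, so $m=0$). Proposition~\ref{mod.3} then gives $\overline{M}$ essential socle, hence $M$ contains a simple. Your ``main obstacle''---that a single unit $\psi_{e_i,M}$ need not be injective---is thus resolved not by iterating on kernels but by bundling all the units together; the identity $1=\sum_i e_i$ does in one stroke what your support bookkeeping does in $|F|$ steps. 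Your approach has the minor conceptual advantage of locating the simple inside a single $\Gg_{e_i}$ rather than a direct sum, but the paper's is decidedly cleaner.
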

\begin{proof}
Obviously there always exist such idempotents $e_i$ and $F$ is finite. It is enough to show any such $M$ contains a simple submodule; if this holds, for any submodule $N$ of $M$, if $e_iM$ is $e_iAe_i$-semiartinian, then  $e_i(M/N)\simeq e_iM/e_iN$ is semiartinian over $e_iAe_i$ for all $i\in F$ and thus $M/N$ contains a simple submodule. \\
Let $M\rightarrow \overline{M}=\bigoplus\limits_{i\in F}\Gg_{e_i}(e_iM)$ be the canonical morphism, $m\mapsto (\psi_{e_i,M}(m))_{i\in F}$. This is obviously injective: $\psi_{e_i,M}(m)=0$ for all $i\in F$ implies $e_im=0,\,\forall i\in F$ so $m=1\cdot m=\sum\limits_{i\in F}e_i\cdot m=0$. By Proposition \ref{mod.3} $\overline{M}$ has essential socle, and so $s(\overline{M})\cap M\neq 0$ (provided $M\neq 0$) and this ends the proof. The last statement follows for $M={}_AA$.
\end{proof}

\begin{remark}
It not difficult to see that if $M$ is semiartinian over $A$ then $eM$ is semiartinian for any idempotent $e$; this is also a consequence of the more general results of \cite{CICN} or can be again seen directly.
\end{remark}

\section{The domain case}

We show that if $C$ is a coalgebra such that $C^*$ is a (local) domain and $C^*$ has the Dickson splitting property (that is, the semiartinian part of every left $C^*$-module splits off), then $C$ has finite Loewy length (in fact in this case $C^*$ will be a division algebra). We will again make use of the fact that if $X$ is a left subcomodule of $C$ then $X^\perp$ is a left ideal of $C^*$ and there is an isomorphism of left $C^*$-modules $(C/X)^*\simeq X^\perp$. 


\begin{remark}
For an $f\in C^*$ denote by $\overline{f}:C\rightarrow C$ the morphism of left $C$-comodules defined by $\overline{f}(c)=c_1f(c_2)$. Then, the maps $C^*\ni f\longmapsto \overline{f}\in{\End({}^CC)}$ and $\End({}^CC)\ni\alpha\longmapsto \varepsilon\circ f\in C^*$ are inverse isomorphisms of $k$-algebras (for example by \cite{DNR}, Proposition 3.1.8 (i)).
\end{remark}

\begin{lemma}\label{Dom.p.domain}
Let $C$ be a coalgebra. Then $C^*$ is a domain if and only if any nonzero morphism of left (or right) $C$-comodules $\alpha:C\rightarrow C$ is surjective. Moreover, if this holds, $C^*$ is local. 
\end{lemma}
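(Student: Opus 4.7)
I will use the Remark's identification $C^*\cong\End({}^CC)$ via $f\mapsto\overline{f}$, where $\overline{f}(c)=c_1f(c_2)$, which is an isomorphism of $k$-algebras: in particular $\overline{f*g}=\overline{f}\circ\overline{g}$, so convolution of functionals corresponds to composition of left $C$-comodule endomorphisms.

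For the forward direction of the equivalence, assume $C^*$ is a domain and let $\alpha=\overline{f}$ be a nonzero left $C$-comodule endomorphism of $C$, so $f\neq 0$. The image $\im(\alpha)$ is a left subcomodule (left coideal) of $C$; if it were proper, then $(\im\alpha)^{\perp}$ would be a nonzero left ideal of $C^*$, and picking $g\neq 0$ there I would get $(g*f)(c)=g(c_1)f(c_2)=g(\overline{f}(c))=0$ for all $c\in C$, i.e.\ $gf=0$ in $C^*$ with both factors nonzero, contradicting the domain hypothesis. Hence $\alpha$ is surjective. Conversely, assume every nonzero left comodule endomorphism of $C$ is surjective, and suppose $f,g\in C^*$ satisfy $fg=0$ with $g\neq 0$; then $\overline{f}\circ\overline{g}=\overline{fg}=0$, but $\overline{g}$ is surjective by hypothesis, forcing $\overline{f}=0$ and hence $f=0$. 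The right-comodule version is obtained by the symmetric identification $\End(C^C)\cong C^{*,\mathrm{op}}$ together with the observation that $C^*$ is a domain iff $C^{*,\mathrm{op}}$ is.

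For the ``moreover'' statement, I will exploit the pseudocompact structure of $A=C^*$. By Proposition \ref{p.0}, $A/J\cong\prod_{S\in\Ss}A_S$ with each $A_S=C_S^*$ finite-dimensional simple. The key classical input is that in a pseudocompact algebra, any idempotent of $A/J$ lifts to an idempotent of $A$. In a domain the only idempotents are $0$ and $1$; so if $|\Ss|\geq 2$, the idempotent $1_{A_S}\in\prod_T A_T$ for some fixed $S$ would lift to a nontrivial idempotent $e\in A$, giving $e(1-e)=0$ with $e,1-e\neq 0$, a contradiction. Hence $\Ss=\{S_0\}$ is a singleton and $A/J\cong A_{S_0}\cong M_n(D)$ for some division ring $D$ by Wedderburn. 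If $n\geq 2$, a nontrivial matrix idempotent in $M_n(D)$ again lifts to a nontrivial idempotent of $A$, contradiction; so $n=1$ and $A/J\cong D$ is a division ring, i.e.\ $A$ is local.

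\textbf{Main obstacle.} The equivalence is a short convolution computation and offers little resistance. The ``$C^*$ local'' conclusion is the delicate part: it requires knowing that idempotents lift modulo the Jacobson radical for pseudocompact algebras. This is standard (write $A=\varprojlim A/I$ over the open cofinite ideals, lift through the nilpotent radicals of the finite-dimensional quotients, and pass to the inverse limit), but it is the crucial external fact without which one cannot pass from ``$A/J$ has nontrivial idempotents'' to ``$A$ has nontrivial idempotents.''
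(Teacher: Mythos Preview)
Your proof is correct, but both halves take a different route from the paper's.

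For the equivalence, the paper first establishes that $C^*$ is local and only then proves the forward implication: it uses that $C_0$ is (up to isomorphism) the unique simple comodule to embed $C_0\hookrightarrow C/\im\alpha$, extends the inclusion $C_0\hookrightarrow C$ along this embedding by injectivity of $C$, and thereby manufactures a nonzero $\beta$ with $\beta\alpha=0$. Your perp/annihilator argument is shorter and more elementary: by reading off $g*f$ as $g\circ\overline{f}$ you avoid any appeal to injectivity or to the structure of $C_0$, and in particular you do not need locality as an input. This is a genuine simplification.

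For the ``moreover'' clause the trade-off goes the other way. The paper's argument is entirely self-contained: if $C_0$ is not simple, split $C_0=S\oplus T$, pass to injective envelopes to get $C=E(S)\oplus E(T)$, and read off orthogonal idempotents in $\End({}^CC)\cong C^*$ directly. Your argument instead invokes idempotent lifting modulo $J$ in pseudocompact algebras, which is true but is an external structural fact. Both reach the same conclusion; the paper's version just keeps everything inside comodule theory, whereas yours leans on the general theory of pseudocompact (or semiperfect-type) rings.
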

\begin{proof}
Since $J=C_0^\perp$, we have $C^*/J\simeq C_0^*$ and therefore $C^*$ is local if and only if $C_0^*$ is a simple left $C^*$-module, equivalently $C_0$ is a simple comodule (left or right). \\
Assume first $C^*$ is a domain, so is $\End({}^CC)\simeq C^*$ is a domain. If $C_0$ is not simple, then there is a direct sum decomposition $C_0=S\oplus T$, where $S$ and $T$ are semisimple left $C$-comodules that are nonzero. In this case, if $E(S)$ and $E(T)$ are injective envelopes of $S$ and $T$ respectively that are contained in $C$, we get that $C=E(S)\oplus E(T)$ because $C_0$ is essential in $C$ and $C$ is injective (as left $C$-comodules). Then defining $\alpha,\beta:C\rightarrow C$ by 
$$
\alpha=\{
\begin{array}{cc}
	{\rm Id\;\;\;on\,}E(S)\\
	0{\;\;\;\;\;\rm on\,}E(T)
\end{array}
$$
and 
$$
\beta=\{
\begin{array}{cc}
	0{\;\;\;\;\;\rm on\,}E(S)\\
	{\rm Id\;\;\;on\,}E(T)
	\end{array}
$$
we obviously have $\alpha,\beta\neq 0$ and $\alpha\circ\beta=0$, showing that $\End({}^CC)$ cannot be a domain in this case. Therefore $C^*$ is local. Next, if some nonzero morphism of left $C$-comodules $\alpha:C\rightarrow C$ is not surjective, then ${\rm Im}(\alpha)\neq C$ so $C/{\rm Im}(\alpha)\neq 0$. Therefore there is a simple left subcomodule of $C/{\rm Im}(\alpha)$, that is, a monomorphism $\eta:C_0\rightarrow C/{\rm Im}(\alpha)$ (because $C_0$ is the only type of simple $C$-comodule). Then the inclusion $i:C_0\rightarrow C$ extends to $\beta_0:C/{\rm Im}(\alpha)\rightarrow C$, such that $\beta_0\eta=i$. If $\beta:C\rightarrow C$ is the composition of $\beta_0$ with the canonical projection $C\rightarrow C/{\rm Im}(\alpha)$ then obviously $\beta\vert_{{\rm Im}(\alpha)}=0$, so $\beta\circ\alpha=0$. But $\beta\neq 0$ because $\beta_0\neq 0$ since $\beta_0$ extends a monomorphism ($i$), and also $\alpha\neq 0$, yielding a contradiction to $C^*$-domain.\\
The converse implication is obvious: any composition of surjective morphisms is surjective, thus nonzero.
\end{proof}

\begin{lemma}\label{Dom.l.2}
If $C$ is colocal (equivalently $C^*$ is local), then for any left subcomodule $Y$ of $C$, we have that $Y^*$ is a local, cyclic and indecomposable left $C^*$-module.
\end{lemma}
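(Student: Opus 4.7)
The plan is to translate the statement into cyclic quotients of the local ring $C^*$ and then read off all three properties at once. First, I would invoke the $\perp$-correspondence from the start of Section 1: since $Y$ is a left subcomodule of $C$, dualizing the short exact sequence $0\to Y\to C\to C/Y\to 0$ of left $C$-comodules yields the short exact sequence of left $C^*$-modules
\[0\to Y^\perp\to C^*\to Y^*\to 0,\]
where $(C/Y)^*\cong Y^\perp$ and $Y^\perp$ is a left ideal because $Y$ is a left coideal. This identifies $Y^*\cong C^*/Y^\perp$ as left $C^*$-modules and immediately gives cyclicity, with the image of $\varepsilon$ as generator.

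Next, I would use the local hypothesis on $C^*$ to pin down the submodule structure of $C^*/Y^\perp$. Since $C^*$ is local, its Jacobson radical $J=C_0^\perp$ is the unique maximal left ideal. For $Y\neq 0$ the ideal $Y^\perp$ is proper (otherwise $Y=Y^{\perp\perp}=0$), hence contained in $J$. By the correspondence theorem $J/Y^\perp$ is therefore the unique maximal submodule of $Y^*$, so $Y^*$ is local. Indecomposability is then automatic for cyclic local modules: in any nontrivial decomposition $Y^*=M_1\oplus M_2$ each summand is cyclic (as a quotient of $Y^*$) and thus admits a maximal submodule $N_i$ by Zorn's lemma, giving two distinct maximal submodules $N_1\oplus M_2$ and $M_1\oplus N_2$ of $Y^*$, contradicting locality.

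There is no substantive obstacle: the whole lemma is an immediate consequence of the $\perp$-dictionary together with the definition of a local ring. The only care needed is to verify that $Y^\perp$ is a proper left ideal when $Y\neq 0$ (so that locality of $C^*$ forces $Y^\perp\subseteq J$) and to recall that every cyclic module admits a maximal submodule.
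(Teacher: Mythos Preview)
Your proof is correct and follows essentially the same approach as the paper: both start from the surjection $C^*\to Y^*$ to get cyclicity, then use locality of $C^*$ to conclude $Y^*$ has a unique maximal submodule (you via the lattice correspondence $Y^\perp\subseteq J$, the paper via the induced isomorphism $C^*/J\to Y^*/J(Y^*)$), and both derive indecomposability from locality by observing that a nontrivial direct sum would force two maximal submodules. The only cosmetic difference is that the paper phrases the indecomposability step in terms of $J(A)\oplus J(B)=J(Y^*)$ forcing one summand of the simple top to vanish, which is your argument in contrapositive form.
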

\begin{proof}
The epimorphism of left $C^*$-modules $p:C^*\rightarrow Y^*\rightarrow 0$ induces an epimorphism $C^*/J\rightarrow Y^*/J(Y^*)$ which must be an isomorphism since $C^*/J$ is simple and $Y^*/J(Y^*)$ is nonzero because $Y^*$ is finitely generated (cyclic) over $C^*$. This shows that $Y^*$ is local. If $Y^*=A\oplus B$ with $A\neq 0$, $B\neq 0$ then $A$ and $B$ would be cyclic too, and therefore we would get $J(A)\neq A$ and $J(B)\neq B$. But $A/J(A)\oplus B/J(B)=Y^*/J(Y^*)$ is simple, and therefore showing that $A/J(A)=0$ or $B/J(B)=0$, a contradiction.
\end{proof}

\begin{lemma}\label{Dom.l.3}
If $M$ is a left $C$-comodule, then $\bigcap\limits_{n}(J^n\cdot M^*)=0$.
\end{lemma}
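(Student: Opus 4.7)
The plan is to reduce the statement to finite-dimensional subcomodules of $M$, where nilpotence of the Jacobson radical of a finite-dimensional algebra takes over. Fix $\xi\in\bigcap_n J^n\cdot M^*$; I would show that $\xi$ vanishes on every finite-dimensional subcomodule $F\leq M$, and then invoke the fundamental theorem for comodules ($M$ is the union of its finite-dimensional subcomodules) to conclude $\xi=0$.

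For the reduction, the inclusion $F\hookrightarrow M$ of left $C$-comodules dualises to a surjective morphism of left $C^*$-modules $\pi_F:M^*\twoheadrightarrow F^*$ (any functional on $F$ extends to $M$ by picking a complement). Because $\pi_F$ is $C^*$-linear and surjective one has $\pi_F(J^n\cdot M^*)=J^n\cdot\pi_F(M^*)=J^n\cdot F^*$, so $\pi_F(\xi)\in\bigcap_n J^n\cdot F^*$. It therefore suffices to prove that $J^m\cdot F^*=0$ for some $m=m(F)$.

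For this I would use that a finite-dimensional comodule $F$ is a $D$-comodule for some finite-dimensional subcoalgebra $D\leq C$, so that $F^*$ is a finite-dimensional left $D^*$-module and the $C^*$-action on $F^*$ factors through the restriction map $\pi:C^*\twoheadrightarrow D^*$. Since $D_0\subseteq C_0$, one has $\pi(J)=\pi(C_0^\perp)\subseteq D_0^\perp=J(D^*)$; and because $D^*$ is finite-dimensional, $J(D^*)^m=0$ for some $m$, so $J^m$ annihilates $F^*$. Combining, $\pi_F(\xi)=0$ for every finite-dimensional $F\leq M$, hence $\xi=0$.

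I do not expect a genuine obstacle here: the only step that deserves any verification is the identity $\pi_F(J^n\cdot M^*)=J^n\cdot F^*$, which is immediate from $C^*$-linearity and surjectivity of $\pi_F$. Morally, the argument simply transports—via the pseudocompact structure of $M^*$, dually the local finiteness of $M$ as a comodule—a statement about the possibly infinite-dimensional object $M^*$ to one about finite-dimensional $D^*$-modules, where nilpotence of the Jacobson radical closes the argument.
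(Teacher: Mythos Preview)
Your argument is correct. The paper's proof rests on the same underlying idea---local finiteness of comodules forces high powers of $J$ to act trivially on any fixed element---but it is organized more directly: rather than passing to a finite-dimensional subcoalgebra $D$ and invoking nilpotence of $J(D^*)$, the paper fixes $x\in M$, writes $\rho(x)=\sum_s c_s\otimes x_s$, observes that all the $c_s$ lie in some $C_n$ (since $C=\bigcup_n C_n$), and then uses $(J^{n+1})^\perp=C_n$ to compute directly that any $f\in J^{n+1}\cdot M^*$ satisfies $f(x)=0$. Your detour through a finite-dimensional $D$ is a repackaging of exactly this (the coradical filtration of $D$ being finite is equivalent to $J(D^*)$ being nilpotent); it buys a clean conceptual reduction to finite-dimensional algebras, while the paper's version avoids naming $D$ and the restriction map $\pi_F$ at the cost of a short explicit computation with the comodule structure.
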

\begin{proof}
Let $f\in\bigcap\limits_nJ^n\cdot M^*$. Pick $x\in M$, and write $\rho(x)=\sum\limits_sc_s\otimes x_s\in C\otimes X$ the comultiplication of $x$. Since $\bigcup\limits_{n}C_n=C$, $c_s\in C_n,\,\forall s$ for some $n$. 
Also, as $f\in J^{n+1}\cdot M^*$, $f=\sum\limits_jf_jm_j^*$, $f_j\in J^{n+1},\,m_j^*\in M^*$. Since $(J^{n+1})^\perp=C_n$ we get $f_j(c_s)=0$ for all $j$ and $s$ and then
\begin{eqnarray*}
f(x) & = & (\sum\limits_jf_jm_j^*)(x) \\
 & = & \sum\limits_jm_j^*(x\cdot f_j) \;\;\;{(\rm by\,the\,left\,}C^*-{\rm module\,structure\,on\,}M^*) \\
 & = & \sum\limits_jm_j^*(\sum\limits_s(f_j(c_s)x_s)) \\
 & = & 0 \;\;\; ({\rm because\,}f_j(c_s)=0,\,\forall\,j,s)
\end{eqnarray*}
This shows that $f(x)=0$ and since $x$ is arbitrary, we get $f=0$.
\end{proof}

\subsection*{The Splitting property}

\begin{proposition}\label{Dom.p.Ideals}
If $C$ is colocal and $C^*$ has the (left) Dickson splitting property, then $C^*/I$ is semiartinian for any nonzero left ideal $I$ of $C^*$.
\end{proposition}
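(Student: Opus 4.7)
The plan is to apply the Dickson splitting to $M=C^*/I$, writing $M=T\oplus X$ with $T$ semiartinian and $\mathcal{T}(X)=0$, and to argue that $X$ must be zero. Assume otherwise for contradiction, and let $I'\subseteq C^*$ be the preimage of $T$ under the canonical surjection $C^*\to C^*/I$; then $I'$ is a left ideal of $C^*$ with $I\subseteq I'\subsetneq C^*$ and $X\cong C^*/I'$. Since $I\neq 0$ we have $I'\neq 0$, and since $C$ is colocal, $C^*$ is local with unique maximal left ideal $J$, so $I'\subseteq J$.

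Next I would compute ${\rm Ext}^1_{C^*}(X,S)$ with $S=C^*/J$ the unique simple left $C^*$-module, using the projective resolution $0\to I'\to C^*\to X\to 0$. Since $I'\subseteq J$ and $J$ (two-sided, as the Jacobson radical of the local ring $C^*$) annihilates $S$, the restriction map $\Hom_{C^*}(C^*,S)\to\Hom_{C^*}(I',S)$ vanishes, so ${\rm Ext}^1(X,S)\cong\Hom_{C^*}(I',S)$. A nonzero $C^*$-linear map $I'\to S$ exists iff $JI'\neq I'$: if $JI'=I'$, iterating yields $I'\subseteq J^n$ for every $n$, so $I'\subseteq\bigcap_n J^n=0$ by Lemma \ref{Dom.l.3} applied to the comodule $M=C$, contradicting $I'\neq 0$. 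Hence ${\rm Ext}^1(X,S)\neq 0$, so I can pick a non-split extension $0\to S\to E\to X\to 0$. One checks that $\mathcal{T}(E)=S$ (using $\mathcal{T}(E)/S\hookrightarrow\mathcal{T}(E/S)=\mathcal{T}(X)=0$), and by the Dickson splitting $E=S\oplus E'$ with $\mathcal{T}(E')=0$ and $E'\cong E/S=X$; then the inclusion $E'\hookrightarrow E$ followed by the projection to $X$ is an isomorphism, splitting the sequence and contradicting its non-splitness. Thus $X=0$ and $C^*/I=T$ is semiartinian.

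The main obstacle is the homological reformulation of the Dickson splitting property: recognizing that the splitting of short exact sequences $0\to S\to E\to X\to 0$ with $\mathcal{T}(X)=0$ is equivalent to the vanishing of ${\rm Ext}^1(X,S)$, and setting this up so that the pseudocompact input $\bigcap_n J^n=0$ from Lemma \ref{Dom.l.3} can be leveraged against the ideal $I'$ via the standard ${\rm Hom}/{\rm Ext}$ long exact sequence.
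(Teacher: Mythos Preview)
Your proof is correct and takes a genuinely different route from the paper's. The paper works on the coalgebra side: given $0\neq f\in I$, it sets $K=(Af)^\perp$ and, by the fundamental theorem of comodules, finds a left coideal $Y$ with $K\subsetneq Y$ and $\dim(Y/K)<\infty$. The dual sequence $0\to(Y/K)^*\to Y^*\to K^*\to 0$ then embeds a nonzero finite-dimensional (hence semiartinian) module into $Y^*$, so the Dickson decomposition $Y^*=\Sigma\oplus T$ has $\Sigma\neq 0$; since $Y^*$ is indecomposable by Lemma~\ref{Dom.l.2}, one gets $Y^*=\Sigma$, whence $K^*$ and its quotient $C^*/I$ are semiartinian. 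Your argument stays on the module side and is homological: you split $C^*/I$ directly, isolate the non-semiartinian summand $X\cong C^*/I'$, and use $\bigcap_n J^n=0$ (Lemma~\ref{Dom.l.3} with $M=C$) to force $JI'\neq I'$ and hence $\mathrm{Ext}^1(X,S)\neq 0$; a non-split extension of $X$ by $S$ then has $\mathcal{T}(E)=S$, and the Dickson splitting of $E$ contradicts non-splitness. The paper's approach manufactures a finite-dimensional witness and leans on indecomposability (Lemma~\ref{Dom.l.2}), never touching Lemma~\ref{Dom.l.3}; your approach avoids the coideal construction and Lemma~\ref{Dom.l.2} entirely, and makes transparent that the only pseudocompact input needed beyond locality is $\bigcap_n J^n=0$.
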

\begin{proof}
Let $I$ be a nonzero left ideal of $C^*$ and $0\neq f\in I$. Let $K=Af^\perp$; then since $Af$ is finitely generated, it is closed in the finite topology of $C^*$ (for example by \cite{I2}, Lemma 1.1) and we have $Af=K^\perp$. Note that $K\neq C$ since $Af\neq 0$. We have that $K$ is a left coideal of $C$, and as $K\neq C$, we can find a left coideal $Y\leq C$ such that $Y/K$ is finite dimensional nonzero (by the Fundamental Theorem of Comodules). Then there is an exact sequence of left $C^*$-modules
$$0\rightarrow (Y/K)^*\rightarrow Y^*\rightarrow K^*\rightarrow 0$$
Write $Y^*=\Sigma\oplus T$, with $\Sigma$ semiartinian and $T$ containing no semiartinian (or equivalently, no simple) submodules. Then $\Sigma\neq 0$ since $0\neq(Y/K)^*$ is a finite dimensional (thus semiartinian) left $C^*$-module contained in $Y^*$. But $Y^*$ is indecomposable by Lemma \ref{Dom.l.2}, and therefore $Y^*=\Sigma$ follows, i.e. $Y^*$ is semiartinian. The above sequence shows that $K^*$ is semiartinian too and since $K^\perp=Af\subseteq I$ we get an epimorphism $K^*\simeq C^*/K^\perp\rightarrow C^*/I\rightarrow 0$, and therefore $C^*/I$ is semiartinian.
\end{proof}

\begin{theorem}\label{T.1}
Let $C$ be a coalgebra such that $C^*$ has the Dickson splitting property for left modules. If $C^*$ is a domain, $C^*$ must be a finite dimensional division algebra (so it will have finite Loewy length).
\end{theorem}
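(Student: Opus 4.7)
The plan is to reduce the theorem to showing $J=J(C^*)=0$. Combined with Lemma~\ref{Dom.p.domain} (giving that $C^*$ is local, so $C^*/J\cong C_0^*$ is a finite-dimensional division algebra since $C_0$ is a simple, hence finite-dimensional, subcoalgebra of $C$), this is exactly what we want.

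Suppose for contradiction that $J\neq 0$, and apply the Dickson splitting to $C^*$ itself. Because $C^*$ is local, $1$ is a primitive idempotent and $C^*$ is indecomposable as a left module over itself, so in $C^*=\mathcal{T}(C^*)\oplus X$ either $\mathcal{T}(C^*)=0$ or $\mathcal{T}(C^*)=C^*$. If $\mathcal{T}(C^*)=C^*$, then $C^*$ is semiartinian, so it contains a minimal left ideal $I_0=C^*x$; Nakayama's lemma applied to the cyclic module $I_0$ forces $JI_0\subsetneq I_0$, and by simplicity $JI_0=0$, giving $jx=0$ for any $0\neq j\in J$, contradicting the domain property. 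So the substantive case is $\mathcal{T}(C^*)=0$, i.e., $C^*$ has no simple left submodule.

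For this case the plan is to feed the Dickson splitting a much larger module built from the semiartinian quotients furnished by Proposition~\ref{Dom.p.Ideals}, namely $M:=\prod_{n\geq 1} C^*/J^n$. Each factor $C^*/J^n\cong C_{n-1}^*$ is finite-dimensional, hence semiartinian; by Lemma~\ref{Dom.l.3} we have $\bigcap_n J^n=0$, so the diagonal $\iota\colon C^*\hookrightarrow M$ is injective. Writing $M=\mathcal{T}(M)\oplus Y$, the submodule $\iota(C^*)\cap\mathcal{T}(M)$ is a semiartinian submodule of the torsion-free $C^*$ and must vanish, giving an embedding $\iota_Y\colon C^*\hookrightarrow Y$ into the torsion-free summand.

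The main obstacle, and the technical heart of the proof, is now to derive an explicit contradiction by exploiting the pseudocompact identification $C^*=\varprojlim_n C^*/J^n$. The idea, adapted from the classical argument that the torsion subgroup of $\prod_n\mathbb{Z}/p^n\mathbb{Z}$ is not a direct summand, is to use a ``completion-type'' element $\xi\in M$ that does not come from the diagonal $\iota(C^*)$ but which exists precisely because $C^*$ is complete in its finite topology; combined with a suitable $0\neq f\in J$ and Proposition~\ref{Dom.p.Ideals}, one aims to show that the class of $\xi$ in $Y$ forces a division relation $\iota_Y(1)\in f^n Y$ for every $n$, which, pulled back through $\iota_Y$ and using the domain property, places some nonzero element of $C^*$ inside $\bigcap_n J^n$, contradicting Lemma~\ref{Dom.l.3}. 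The coalgebraic input—that $C^*$ is the dual of a coalgebra and hence $J$-adically complete and Hausdorff—is precisely what is absent in Cozzens's non-pseudocompact counterexample and is what makes the argument close; verifying this step rigorously, with careful choice of the witness $\xi$, is the delicate part of the proof.
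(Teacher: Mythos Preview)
Your setup and the $\prod_n\mathbb{Z}/p^n\mathbb{Z}$ analogy are on target, but the proposal stops exactly where the real work begins, and the mechanism you sketch in the last paragraph does not close. The claimed contradiction---``$\iota_Y(1)\in f^nY$ for all $n$, pulled back through $\iota_Y$, yields a nonzero element of $\bigcap_n J^n$''---fails because $\iota_Y$ is only injective: from $\iota_Y(1)=f^ny_n$ with $y_n\in Y$ you cannot conclude $y_n\in\iota_Y(C^*)$, so there is nothing to pull back. What the divisibility heuristic actually gives (using $\bigcap_n J^nM=0$, which does hold for $M=\prod_n C^*/J^n$) is that the witness element lands in the \emph{semiartinian} summand $T$, not in $Y$; the contradiction must then be extracted on the $T$ side, and that step is the missing idea. (A minor point: the isomorphism $C^*/J^n\cong C_{n-1}^*$ is unjustified---it would require $J^n$ to be closed---but this is not fatal, since Proposition~\ref{Dom.p.Ideals} still makes $C^*/J^n$ semiartinian because $J^n\neq 0$ in a domain.)

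The paper carries the argument through with a different product, $M=\prod_{n\ge 0}A/M_n^\perp$ where $M_n=(\overline{f^n})^{-1}(C_n)$, and witness $\lambda=(\hat\varepsilon,\hat f,\hat{f^2},\dots)$. The shift identity $\lambda=t_n+f^n x_n$ (with $t_n$ in a finite, hence semiartinian, sub-product) together with Lemma~\ref{Dom.l.3} forces the non-semiartinian component of $\lambda$ to vanish, so $\lambda\in T$. Semiartinianity of $A\lambda$ then produces $g$ with $Ag\lambda$ simple, whence $Jgf^n\subseteq M_n^\perp$ for all $n$. The reason for this particular $M_n$ is that $\overline{f^n}(M_n)=C_n$ by surjectivity of $\overline{f}$ (Lemma~\ref{Dom.p.domain}), so on the comodule side the inclusion unwinds to $\overline{g}(C_n)\subseteq C_0$ for every $n$, i.e.\ $\overline{g}(C)\subseteq C_0\subsetneq C$, contradicting Lemma~\ref{Dom.p.domain}. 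With your module $\prod C^*/J^n$ the analogous computation yields only $\overline{g}\bigl(\overline{f}^n(C_n)\bigr)\subseteq C_0$, and since $\overline{f}(C_k)\subseteq C_{k-1}$ this is vacuous; the specific choice of $M_n$ is essential, not cosmetic.
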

\begin{proof}
Note that $C_0$ is finite dimensional by Corollary \ref{p.1}. We show that $C=C_0$, which will end the proof, since then $C^*=C_0^*$ is a finite dimensional semisimple algebra which is a domain, thus it must be a division algebra. \\
Assume $C_0\neq C$. Then $J=J(C^*)\neq 0$ and take $f\in J$, $f\neq 0$. Denote $M_n=(\overline{f}\circ\dots\circ\overline{f})^{-1}(C_n)=(\overline{f}^n)^{-1}(C_n)=(\overline{f^n})^{-1}(C_n)$, with $M_0={\rm Id}^{-1}(C_0)=C_0$. Note that $M_n\subseteq M_{n+1}$, $\overline{f}^n(M_n)=C_n$ since $\overline{f}$ is surjective by Lemma \ref{Dom.p.domain} and $M_n^\perp\neq 0$ (otherwise $M_n=(M_n^\perp)^{\perp}=0^\perp=C$ so $C_n=\overline{f^n}(M_n)=\overline{f}^n(C)=C$; but $C=C_n$ is excluded by assumption). \\
Denote $A=C^*$ and 
$$M=\frac{A}{M_0^\perp}\times \frac{A}{M_1^\perp}\times\dots\times\frac{A}{M_n^\perp}\times\dots=\prod\limits_{n\geq 0}\frac{A}{M_n^\perp}$$
as a left $A$-module. Also put $\lambda=(\hat\varepsilon,\hat{f},\hat{f^2},\dots)\in M$ (where $\hat h$ denotes the image of $h\in A$ modulo some $M_n^\perp$). Let $M=T\oplus X$ with $T$ semiartinian and $X$ containing no semiartinian modules. If $t_n=(\hat\varepsilon,\hat f,\dots,\hat{f^{n-1}},0,0,\dots)$ then $t_n\in \prod\limits_{0\leq i<n}A/M_i^\perp\,\times 0$ which is semiartinian since it is the quotient of $(A/M_{n}^\perp)^n$, and $A/M_n^\perp$ is semiartinian by Proposition \ref{Dom.p.Ideals}. Put $x_n=(0,0,\dots,0,\hat\varepsilon,\hat f,\hat{f^2},\dots,\hat{f^n},\dots)$ ($\varepsilon=1_A$ is on position "n", with positions starting from "0") and write $\lambda=t+x$ and $x_n=t_n'+x_n'$ with $t,t_n'\in T$ and $x,x_n'\in X$. Then
$$t+x=\lambda=t_n+f^n\cdot x_n= t_n+f^n(t_n'+x_n')=(t_n+f^n\cdot t_n')+f^n\cdot x_n'$$
shows that $x=f^n\cdot x_n'$ (since $M=T\oplus X$). Therefore if $x=(\hat{y_p})_{p}\in M$, $x_n'=(\hat{y_{n,p}})_p\in M$, $\hat{y_n}, \hat{y_{n,p}}\in A/M_n^\perp$, we get $\hat{y_p}=f^n\cdot \hat{y_{n,p}}$, for all $n,p$. Since $f\in J$, we get $\hat{y_p}\in J^n\cdot (A/M_p^\perp)$ for all $n$. Fixing $p$ and using Lemma \ref{Dom.l.3} we get that $\hat{y_p}=0$ (since $A/M_p^\perp\simeq M_p^*$). This holds for all $p$, hence $x=0$ and then $\lambda=t\in T$. Note that $\lambda\neq 0$ (since $\varepsilon\notin M_0^\perp=C_0^\perp=J\neq C$) and since $A\cdot \lambda$ is semiartinian, there is some $g\in A$ such that $Ag\lambda$ is a simple left $A$-module. Then $Ag\hat{f^n}=(Agf^n+M_n^\perp)/M_n^\perp\subseteq A/M_n^\perp$ is either simple or 0 for all $n$ (since it is a quotient of $Ag\lambda$) and so it must be annihilated by $J$ (use Remark \ref{r.J} for example). Thus $J\cdot g\hat{f^n}=\hat{0}$ in $A/M_n^\perp$ and so $J\cdot gf^n\subseteq M_n^\perp$ and then for $a\in J$ and $m\in M_n$ we have
\begin{eqnarray*}
0 & = & a\cdot g\cdot f^n(m)=a(m_1)gf^n(m_2)=a((gf^n)(m_2)m_1)\\
  & = & a(\overline{gf^n}(m))=a(\overline{g}\overline{f^n}(m))
\end{eqnarray*}
Therefore, $0=a(\overline{g}(\overline{f}^n(M_n)))=a(\overline{g}(C_n))$ for all $a\in J$, which shows that $\overline{g}(C_n)\subseteq J^\perp=C_0$. This holds for all $n$, showing that $\overline{g}(C)=g(\bigcup\limits_{n}C_n)\subseteq C_0$. But $\overline{g}\neq 0$ since $g\neq 0$, so $\overline{g}$ has to be surjective. But this is obviously a contradiction, because $\overline{g}(C)\subseteq C_0\neq C$, and the proof is finished. 
\end{proof}

The above also shows that a domain profinite algebra which has finite Loewy length (equivalently, $A=C^*$ with $C=C_n$ for some $n$) must necessarily be a division algebra. This can actually be easily proved directly by using Lemma \ref{Dom.p.domain}, as we invite the reader to note.


\section{Dickson's Conjecture for duals of coalgebras}

Denote by ${\mathcal T}$ the torsion preradical associated to the Dickson localizing subcategory of ${}_{C^*}{\mathcal M}$. If $A$ is an algebra such that $A/J$ is semisimple, we again use the observation that a left $A$-module $N$ is semisimple if and only if $JN=0$. Moreover, this implies that $N$ is semiartinian of finite Loewy length if and only if $J^nN=0$ for some $n\geq 0$.

\begin{proposition}\label{Dickson.p}
Let $C\neq 0$ be a colocal coalgebra such that $C^*$ has the left Dickson splitting property. Then ${\mathcal T}C^*\neq 0$. 
\end{proposition}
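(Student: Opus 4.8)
The goal is to show that if $C \neq 0$ is colocal and $C^*$ has the left Dickson splitting property, then $\Tt C^* \neq 0$, i.e. $C^*$ has a nonzero semiartinian submodule. Equivalently, since $C^*$ is local with $C^*/J \simeq C_0^*$ simple, it suffices to produce \emph{one} nonzero left ideal $I$ of $C^*$ such that $I$ (as a left $C^*$-module) is semiartinian; then $\Tt C^* \supseteq I \neq 0$. By Proposition \ref{Dom.p.Ideals}, we already know that $C^*/I'$ is semiartinian for \emph{every} nonzero left ideal $I'$ of $C^*$ (this uses only colocal plus Dickson splitting, not the domain hypothesis, so it is available here). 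So the plan is to exhibit a nonzero left ideal $I$ of $C^*$ which arises as $K^\perp$ for a left coideal $K \subsetneq C$ with $C/K$ large enough, and then run the same argument as in Proposition \ref{Dom.p.Ideals}: choose a left coideal $Y$ with $K \le Y \le C$ and $Y/K$ finite-dimensional nonzero, get the exact sequence $0 \to (Y/K)^* \to Y^* \to K^* \to 0$, note $Y^*$ is indecomposable by Lemma \ref{Dom.l.2} and has the nonzero finite-dimensional (hence semiartinian) submodule $(Y/K)^*$, conclude $Y^* = \Sigma$ is semiartinian, hence $K^* \simeq C^*/K^\perp$ is semiartinian.

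Concretely, since $C \neq 0$ we may pick any simple left subcomodule $S \le C$ (it exists, as $C_0 \neq 0$); being finite-dimensional it is closed, so $S = K^\perp$ for the left ideal $K = S^\perp \subsetneq C^*$... wait — I must be careful with which side is which. Let me instead argue directly: take a proper left coideal $K \subsetneq C$ of \emph{finite codimension} (for instance, if $C = C_0$ then $C$ is finite-dimensional and $C^*$ is already semiartinian, trivially giving $\Tt C^* = C^* \neq 0$; otherwise pick $K$ with $0 < \dim(C/K) < \infty$ using the Fundamental Theorem of Comodules, since $C$ is a union of its finite-dimensional subcomodules, or rather, since every nonzero quotient comodule has a finite-dimensional nonzero subquotient). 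Then $K^\perp$ is a nonzero left ideal of $C^*$, and by Proposition \ref{Dom.p.Ideals} applied with $I = K^\perp$ (note $K^\perp \neq 0$ since $K \neq C$), the module $C^*/K^\perp$ is semiartinian. But $C^*/K^\perp \simeq (C/K)^\perp \cdot {}$... more precisely $C^*/K^\perp \simeq K^*$ is a nonzero left $C^*$-module (it is $(C/K^{\perp\perp})^* $; since $K$ is closed, $K^{\perp\perp}=K$), hence it is a nonzero semiartinian quotient of $C^*$. That gives a nonzero semiartinian \emph{quotient}, not submodule — so one more step is needed: $C^*/K^\perp$ semiartinian means it has a simple submodule $\bar L/K^\perp$, and since $C^*/J$ is the \emph{unique} simple module, this simple submodule is $\simeq C^*/J \simeq C_0^*$; pulling back, the preimage contains $J$ and a copy of a simple module above it, but what we actually want is a simple \emph{submodule of $C^*$ itself}. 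Here one uses that $C^*/K^\perp$ being semiartinian and nonzero forces $\mathrm{soc}(C^*/K^\perp) \neq 0$, and then... the cleanest route is: $\Tt(C^*/K^\perp) = C^*/K^\perp \neq 0$, and applying the Dickson splitting property to $M = C^*$ (or rather using that $\Tt$ is a radical and the quotient map $C^* \to C^*/K^\perp$), hmm.

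Let me restate the decisive step more honestly, since this is where the real content sits. We want $\Tt C^* \neq 0$. Apply the Dickson splitting hypothesis to $M = C^*$: write $C^* = \Tt C^* \oplus X$. Suppose for contradiction $\Tt C^* = 0$, so $C^* = X$ has \emph{no} simple submodule. Now pick any proper closed left coideal $K \subsetneq C$ with $C/K$ finite-dimensional nonzero (possible unless $C = C_0$, and if $C = C_0$ then $C^*$ is finite-dimensional semisimple and $\Tt C^* = C^* \neq 0$, done). Then $I := K^\perp$ is a nonzero left ideal, and by Proposition \ref{Dom.p.Ideals}, $C^*/I$ is semiartinian and nonzero, so it has a simple submodule $L/I$. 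Since the simple $C^*$-module is unique up to isomorphism, $L/I \simeq C^*/J$, so $JL \subseteq I \subsetneq L$, i.e. $L \neq JL$ and $L/JL$ is a nonzero semisimple module, hence $L$ has a maximal submodule; but that does not immediately give $L$ a simple \emph{sub}module. This is the genuine obstacle: passing from "semiartinian quotient" to "semiartinian submodule" is not formal. I expect the actual proof resolves it via an infinite-product trick analogous to Theorem \ref{T.1}: form $M = \prod_{n} C^*/M_n^\perp$ for a suitable ascending chain of left coideals $M_n$ (e.g. the coradical filtration, or preimages under multiplication by a fixed $f \in J$), put $\lambda = (\widehat{f^n})_n$, use that each truncation $t_n$ lands in a semiartinian block (by Proposition \ref{Dom.p.Ideals}), split $M = T \oplus X$, and deduce $\lambda \in T$ via Lemma \ref{Dom.l.3} ($\bigcap_n J^n M^* = 0$), so that $A\lambda$ is semiartinian and nonzero; then the image of $A\lambda$ in the first coordinate $C^*/M_0^\perp = C^*/J$ being nonzero, or better, $A\lambda$ itself being a nonzero semiartinian submodule of $M$ whose structure forces a nonzero semiartinian submodule back inside $C^*$. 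Thus the main work — and the step I'd expect to be hardest — is setting up the right product module $M$ and the right element $\lambda$ so that splitting $M$ yields a nonzero semiartinian element that can be "transported" into $C^*$ itself, exactly the mechanism already rehearsed in the proof of Theorem \ref{T.1}, now to be deployed without the domain assumption.
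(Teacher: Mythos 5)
You correctly identify the crux of the matter — Proposition \ref{Dom.p.Ideals} delivers semiartinian \emph{quotients} of $C^*$, whereas $\Tt C^* \neq 0$ requires a semiartinian \emph{submodule}, and this passage is not formal — but you do not find the device the paper uses to bridge it, and your sketch of an infinite-product construction is left entirely unexecuted (and would, as written, inherit from Theorem \ref{T.1} the surjectivity of $\overline{f}$, which is exactly the domain hypothesis you no longer have). The missing idea is short: assume $\Tt C^* = 0$ and consider, for $0 \neq f \in C^*$, the map $\varphi_f : C^* \to C^*$, $\varphi_f(h) = hf$, which is a morphism of \emph{left} $C^*$-modules. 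Its image $C^* f$ is simultaneously a left ideal (a submodule of $C^*$) and isomorphic to the quotient $C^*/\ker \varphi_f$. If $\ker \varphi_f \neq 0$, Proposition \ref{Dom.p.Ideals} makes $C^*/\ker \varphi_f \simeq C^* f$ semiartinian, so $0 \neq C^* f \subseteq \Tt C^*$, a contradiction. Hence every nonzero $f$ is a non-zero-divisor on the left, so $C^*$ is a domain, and Theorem \ref{T.1} then forces $C^*$ to be a finite-dimensional division algebra — semiartinian — again contradicting $\Tt C^* = 0$.

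So the genuine gap is twofold: you do not observe that right multiplication is a left-module map whose image realizes a quotient as a submodule, and you do not see that the dichotomy ``some $\varphi_f$ has nonzero kernel'' versus ``$C^*$ is a domain'' lets you invoke Theorem \ref{T.1} in the second branch rather than reproving its mechanism from scratch. Your preliminary observations (that the $C = C_0$ case is trivial, that Proposition \ref{Dom.p.Ideals} applies in the colocal non-domain setting) are correct, but the proof as proposed does not close.
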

\begin{proof}
Assume otherwise. Then we show that $C^*$ is a domain, and then this will yield a contradiction by Theorem \ref{T.1}, since then $C^*$ is a finite dimensional division algebra so it is semiartinian. To see that $C^*$ is a domain, choose $0\neq f\in C^*$ and define $\varphi_f:C^*\rightarrow C^*$ by $\varphi_f(h)=hf$; then $\varphi_f$ is a morphism of left $C^*$-modules. If $\ker(\varphi_f)\neq 0$ then by Proposition \ref{Dom.p.Ideals}, $C^*\cdot f\simeq C^*/\ker(\varphi_f)$ is semiartinian. This shows that $0\neq C^*\cdot f\subseteq \Tt(C^*)$ which contradicts the assumption. Therefore $\ker(\varphi_f)=0$, following that $f$ is a non zero-divisor. This completes the proof.
\end{proof}

\begin{corollary}\label{4.local_splitting}
Let $C$ be a colocal coalgebra. If $C^*$ has the Dickson splitting property for left $C^*$-modules, then $C^*$ is left semiartinian.
\end{corollary}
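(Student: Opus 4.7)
The plan is to apply the Dickson splitting hypothesis directly to the regular left $C^*$-module and combine this with Proposition \ref{Dickson.p}. We may harmlessly assume $C\neq 0$; otherwise $C^*=0$ is trivially semiartinian.

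First, apply the splitting to $M=C^*$ to obtain a decomposition of left $C^*$-modules
\[
C^* = \Tt(C^*) \oplus X,
\]
where $X$ contains no nonzero semiartinian (equivalently, no simple) submodule. Both summands are left ideals of $C^*$, so writing $1=e+e'$ with $e\in\Tt(C^*)$ and $e'\in X$ and using the uniqueness of the decomposition, a routine computation shows that $e,e'$ are orthogonal idempotents and moreover $\Tt(C^*) = C^*e$ and $X = C^*e'$.

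Finally, invoke locality: since $C$ is colocal, $C^*$ is a local ring, and hence its only idempotents are $0$ and $1$. Thus $e'\in\{0,1\}$. The case $e'=1$ would force $e=0$ and so $\Tt(C^*)=C^*e=0$, which contradicts Proposition \ref{Dickson.p}. Therefore $e'=0$, so $X = C^* e' = 0$, and $C^*=\Tt(C^*)$ is semiartinian as a left $C^*$-module.

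I do not foresee a real obstacle in executing this plan; the substantive work was already carried out in Theorem \ref{T.1} (reducing the colocal domain case) and Proposition \ref{Dickson.p} (nontriviality of $\Tt(C^*)$). The only ingredients beyond those are the standard equivalence between direct sum decompositions of a ring into left ideals and orthogonal idempotent decompositions of $1$, and the fact that idempotents in a local ring are trivial.
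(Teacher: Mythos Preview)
Your proof is correct and follows essentially the same line as the paper: split $C^*=\mathcal{T}(C^*)\oplus X$, use Proposition~\ref{Dickson.p} to rule out $\mathcal{T}(C^*)=0$, and then use that $C^*$ is indecomposable as a left module over itself to force $X=0$. The only cosmetic difference is that the paper cites Lemma~\ref{Dom.l.2} for indecomposability, whereas you phrase it via the triviality of idempotents in the local ring $C^*$; these are equivalent observations.
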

\begin{proof}
We have $C^*={\mathcal T}C^*\oplus I$, for some left ideal $I$ of $C^*$. By the previous Proposition, ${\mathcal T}C^*\neq 0$, and since $C$ is colocal, $C^*$ is indecomposable by Lemma \ref{Dom.l.2}. Therefore we must have $I=0$ and $C^*={\mathcal T}C^*$ is left semiartinian. 
\end{proof}


\begin{theorem}
Let $A$ be a pseudocompact algebra, that is $A=C^*$ for a coalgebra $C$. If $A$ has the Dickson splitting property for left $A$-modules, then $A$ is left semiartinian.
\end{theorem}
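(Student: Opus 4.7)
The plan is to reduce to the colocal case established in Corollary \ref{4.local_splitting} by decomposing $A$ into corners $e_iAe_i$ for a complete set of orthogonal primitive idempotents. By Corollary \ref{p.1}, $\Ss$ is finite and $C_0$ is finite dimensional, so $A/J=C_0^*=\prod_{S\in\Ss}A_S$ is a finite dimensional semisimple algebra. Pick orthogonal primitive idempotents $\bar e_1,\dots,\bar e_N\in A/J$ with $\sum_i\bar e_i=\bar 1$, and lift them to orthogonal primitive idempotents $e_1,\dots,e_N\in A$ with $\sum_i e_i=1$. Such a lift exists because $A$ is the inverse limit of its finite dimensional quotients $D^*$, where $D$ ranges over finite dimensional subcoalgebras of $C$ containing $C_0$; for $D\subseteq D'$ in this system, the kernel of the projection $(D')^*\twoheadrightarrow D^*$ is contained in $J((D')^*)$ and hence nilpotent, so classical orthogonal idempotent lifting along nilpotent ideals, applied coherently across the system, produces the desired $e_i$. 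One then has $A=\bigoplus_{i=1}^N Ae_i$ as left $A$-modules, and each corner $e_iAe_i$ is a pseudocompact algebra (a topological direct summand of $A$) whose residue $e_i(A/J)e_i$ is a division ring; hence $e_iAe_i$ is local and equals $D_i^*$ for some colocal coalgebra $D_i$.

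The key step is to show each $e_iAe_i$ inherits the Dickson splitting property. Given a left $e_iAe_i$-module $N$, put $M=\Gg_{e_i}(N)$ and decompose $M=\Sigma\oplus X$ using the hypothesis on $A$, with $\Sigma$ semiartinian and $X$ containing no simple $A$-submodule. Applying $\Tt_{e_i}$ and invoking Proposition \ref{mod.1}, we get $N\simeq e_iM=e_i\Sigma\oplus e_iX$, and the Remark after Theorem \ref{1.semiartinian} gives that $e_i\Sigma$ is semiartinian over $e_iAe_i$. It remains to show $e_iX$ contains no simple $e_iAe_i$-submodule; suppose for contradiction $T\subseteq e_iX$ is such a simple submodule. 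The construction in the proof of Proposition \ref{mod.3} produces a simple $A$-submodule $S\subseteq\Gg_{e_i}(T)\subseteq\Gg_{e_i}(e_iX)$, characterized as the intersection of all nonzero $A$-submodules of $\Gg_{e_i}(T)$. The adjunction unit $\psi_X:X\to\Gg_{e_i}(e_iX)$ is injective: for $x\in X\subseteq\Gg_{e_i}(N)$, if $\psi_X(x)=0$ then $e_ia\cdot x=0$ in $\Gg_{e_i}(N)$ for all $a\in A$, so evaluating at $e_i$ gives $x(e_ia)=0$ for all $a$, whence $x=0$. Moreover, for any $0\neq t\in T$ we have $e_iAt=(e_iAe_i)t=T$, so $\psi_X(t)\in\Gg_{e_i}(T)\cap\psi_X(X)$ is nonzero; this forces $S\subseteq\psi_X(X)$ by the minimality of $S$, and pulling back through $\psi_X$ yields a simple $A$-submodule of $X$, a contradiction. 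Hence $e_iX$ has no simple $e_iAe_i$-submodule, and by the modular law the Dickson part of $N$ over $e_iAe_i$ equals $e_i\Sigma$, which is a direct summand.

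Combining the steps, Corollary \ref{4.local_splitting} applied to each local $e_iAe_i=D_i^*$ gives that each $e_iAe_i$ is left semiartinian, and Theorem \ref{1.semiartinian} applied to $A=\bigoplus_i Ae_i$ concludes that $A$ is left semiartinian. The main obstacle I anticipate is the Dickson-splitting-inheritance argument for the corners, which requires the careful use of the adjunction between $\Tt_{e_i}$ and $\Gg_{e_i}$ together with the injectivity of the unit on submodules of $\Gg_{e_i}(N)$; the idempotent lifting is essentially standard for pseudocompact algebras, and the final assembly is routine given the tools already in place.
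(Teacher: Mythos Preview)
Your proof is correct and follows the same architecture as the paper's: decompose $A$ via a finite complete set of primitive orthogonal idempotents, show each corner $e_iAe_i$ is a local pseudocompact algebra inheriting the Dickson splitting property, apply Corollary~\ref{4.local_splitting}, and reassemble via Theorem~\ref{1.semiartinian}.

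The differences are in how two sub-steps are justified. The paper obtains the idempotents directly from a decomposition $C=\bigoplus_{i\in F}E_i$ of $C$ into indecomposable injective left comodules (finite by Corollary~\ref{p.1}), which immediately gives $A=\bigoplus E_i^*=\bigoplus Ae_i$; you instead lift idempotents from $A/J$ through the profinite system, which is correct but less direct here. For the inheritance of the Dickson splitting property by the corners $e_iAe_i$, the paper simply cites \cite[Corollary~2.4]{NT} (and \cite{Rad} for $e_iAe_i$ being dual to the coalgebra $e_iCe_i$), whereas you supply a self-contained argument using the paper's own $\Tt_{e_i}/\Gg_{e_i}$ adjunction: your use of the injectivity of $\psi_X$ on $X\subseteq\Gg_{e_i}(N)$ together with the description of $S$ as the intersection of all nonzero submodules of $\Gg_{e_i}(T)$ is a clean way to rule out simple $e_iAe_i$-submodules in $e_iX$. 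This makes your version more self-contained at the cost of some extra length; the paper's version is shorter but leans on external references.
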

\begin{proof}
Let $A=C^*$, $C$-coalgebra and $C=\bigoplus\limits_{i\in F}E_i$ be a decomposition of $C$ into left indecomposable injective comodules; then $F$ is finite by Corollary \ref{p.1}. We have $A=C^*=\bigoplus\limits_{i\in F}E_i^*$ with $E_i^*$ projective indecomposable left $A$-modules, so $E_i^*=Ae_i$ with $(e_i)_{i\in F}$ a complete system of indecomposable orthogonal idempotents. By \cite{NT}, Corollary 2.4 we have that each ring $e_iAe_i$ has the Dickson splitting property for left modules. By \cite{Rad}, Lemma 6 (also see \cite{CGT}), $eAe=eC^*e$ is also a pseudocompact algebra, dual to the coalgebra $eCe=\{e(c_1)c_2e(c_3)\mid c\in C\}$ with counit $e$ and well defined comultiplication $ece\mapsto ec_1e\otimes ec_2e$. Also, since $e_i$ are primitive, $e_iAe_i$ are local. Therefore, Corollary \ref{4.local_splitting} applies, and we get that $e_iAe_i$ are semiartinian for all $i\in F$. Now, by Theorem \ref{1.semiartinian} it follows that $A$ is semiartinian too (one can also use the fact that ${}_{e_iAe_i}{\mathcal M}$ are localizations of ${}_A{\mathcal M}$, and then apply \cite{CICN}, Proposition 3.5 1,(b)). 
\end{proof}

As an immediate consequence, we obtain the following result proved first in \cite{NT} and then independently in \cite{Cu} and \cite{I1}:

\begin{corollary}
Let $C$ be a coalgebra such that the rational submodule of any left $C^*$-module $M$ splits off in $M$. Then $C$ is finite dimensional.
\end{corollary}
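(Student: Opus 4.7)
The plan is to reduce this corollary to the main theorem by identifying the rational submodule functor with the Dickson torsion preradical $\Tt$ on ${}_{C^*}\Mm$.

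First, I would establish that a left $C^*$-module is rational if and only if it is semiartinian. In one direction, every rational module carries a $C$-comodule structure, and by the fundamental theorem of comodules every nonzero comodule contains a simple subcomodule; since the class of rational modules is closed under quotients (rational modules form a localizing subcategory of ${}_{C^*}\Mm$), every rational module is semiartinian. Conversely, the simple left $C^*$-modules are exactly the finite dimensional modules $C_S^*$ for $S\in\Ss$ (as recorded in the discussion preceding Proposition \ref{p.0}), and these are rational; since the subcategory of rational modules is closed under arbitrary direct sums and extensions, a transfinite induction on Loewy length shows that every semiartinian module is rational. Consequently $\mathrm{Rat}(M)=\Tt(M)$ for every $M\in{}_{C^*}\Mm$.

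With this identification, the hypothesis of the corollary is precisely the Dickson splitting property for $C^*$, so the main theorem applies and yields that $C^*$ is left semiartinian, i.e. $C^*=\Tt(C^*)=\mathrm{Rat}(C^*)$. Thus $C^*$ is itself rational as a left module over itself, hence locally finite in the sense that every cyclic submodule is finite dimensional. But $C^*=C^*\cdot 1$ is cyclic, so it is finite dimensional, and therefore so is $C$.

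The only mildly nontrivial step is the equality between rational and semiartinian modules; this is standard once one combines the structure of the simple $C^*$-modules with the closure properties of the rational subcategory (see e.g.\ \cite{DNR}), and it poses no real obstacle.
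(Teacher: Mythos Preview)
Your overall route---show that rational splitting implies Dickson splitting, invoke the main theorem to get $C^*$ semiartinian, then conclude $C^*=\mathrm{Rat}(C^*)$ is cyclic rational hence finite dimensional---is exactly the paper's route. The gap is in your justification that every simple left $C^*$-module is rational. The passage you cite before Proposition~\ref{p.0} only records that $A/J\simeq C_0^*=\prod_{S\in\Ss}A_S$; it does \emph{not} assert that the simple $A$-modules are precisely the $A_S$. When $\Ss$ is infinite that assertion is false: the infinite product $\prod_{S}C_S^*$ has maximal ideals beyond the ``coordinate'' ones (think of $C_0$ spanned by infinitely many grouplikes, so $C_0^*\cong k^{\mathbb N}$, whose simple modules correspond to ultrafilters), and the resulting simples are not rational. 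Hence ``semiartinian $\Rightarrow$ rational'' fails in general, and your identification $\mathrm{Rat}=\Tt$ is not available without further input.

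What is missing is the preliminary step, used explicitly in the paper's proof, that the rational splitting hypothesis forces $C_0$ to be finite dimensional (the paper invokes \cite{NT} or \cite[Proposition 1.2]{I1}; one could equally adapt Proposition~\ref{p.0} and Corollary~\ref{p.1} by noting that $\Sigma=\bigoplus_S A_S$ is also $\mathrm{Rat}(A/J)$). Once $\Ss$ is finite, $A/J$ is finite dimensional semisimple, every simple $A$-module is annihilated by $J$ and hence is one of the $A_S$, and your argument goes through. So insert that step and your proof is complete and coincides with the paper's.
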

\begin{proof}
Note that in this case $C^*$ has the Dickson splitting property for left modules: if $M$ is a left $C^*$-module, $M=R\oplus X$ with $R$ rational - thus semiartinian - and $Rat(X)=0$. Then $X$ contains no simple submodules since all simple modules are rational because $C^*/J(C^*)$ is finite dimensional semisimple in this case (see, for example, \cite{NT} or \cite[Proposition 1.2]{I1}). So $R$ is also the semiartinian part of $M$ and it is a direct summand of $M$. Thus it follows that $C^*$ is semiartinian from the previous Theorem. Now write $C^*=Rat(C^*)\oplus N$ with $Rat(N)=0$; then since $N$ is semiartinian we must have $N=0$ (otherwise $N$ contains simple rational submodules). Hence $Rat(C^*)=C^*$ and this module is also cyclic, so it is finite dimensional. Therefore $C$ is finite dimensional too.
\end{proof}


We note that in several particular cases, some more can be inferred in the case the Dickson splitting property holds (i.e. if $C^*$ is semiartinian).

\begin{lemma}\label{l.1}
If $J=J(A)$ is a finitely generated left ideal, where $A=C^*$ and the coalgebra $C$ is almost connected (i.e. the coradical $C_0$ is finite dimensional), then a finitely generated $A$-module is semiartinian if and only if it has finite Loewy length. 
\end{lemma}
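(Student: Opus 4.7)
The plan is to reduce the statement to the claim that $J^nM=0$ for some $n$, which by the remark preceding Proposition~\ref{Dickson.p} is equivalent to $M$ having finite Loewy length (this uses that $A/J\simeq C_0^*$ is finite dimensional semisimple, thanks to $C$ being almost connected). The direction ``finite Loewy length $\Rightarrow$ semiartinian'' is immediate: a module of finite Loewy length has a finite filtration with semisimple quotients.

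For the harder direction, assume $M$ is finitely generated and semiartinian. Set $L_n=\{x\in M\mid J^nx=0\}=\mathrm{ann}_M(J^n)$; since $J^n$ is a two-sided ideal, each $L_n$ is an $A$-submodule of $M$, and by induction these are precisely the terms of the Loewy filtration of $M$. Let $L_\omega=\bigcup_n L_n$. I claim $L_\omega=M$; once established, this forces $M=L_n$ for some $n$ (an ascending union of submodules containing a finite generating set must stabilize), i.e.\ $J^nM=0$, which is what we want.

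To prove the claim, suppose for contradiction that $L_\omega\subsetneq M$. Then $\bar M:=M/L_\omega$ is nonzero, finitely generated, and semiartinian (all three properties pass to quotients), so it has a simple submodule. Lifting this submodule, one obtains $x\in M\setminus L_\omega$ with $Jx\subseteq L_\omega$. Now the hypothesis that $J$ is a finitely generated left ideal becomes crucial: writing $J=Af_1+\cdots+Af_r$, we have $Jx=Af_1x+\cdots+Af_rx$, a finitely generated $A$-submodule of $L_\omega$. Each $f_ix$ lies in some $L_{n_i}$, so for $n=\max_i n_i$ all $f_ix\in L_n$, and hence $Jx\subseteq L_n$. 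But then $J^{n+1}x=J^n\cdot Jx\subseteq J^nL_n=0$, which gives $x\in L_{n+1}\subseteq L_\omega$, contradicting $x\notin L_\omega$.

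The only delicate step is the last one: finite generation of $J$ is precisely what allows the passage from $Jx\subseteq\bigcup_n L_n$ to $Jx\subseteq L_n$ for a single $n$, and is where the hypothesis is used in an essential way. The rest is just an unravelling of the Loewy filtration together with the almost-connected hypothesis, which is what makes the identification $L_n=\mathrm{ann}_M(J^n)$ available via Remark~\ref{r.J}.
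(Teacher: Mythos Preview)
Your proof is correct and follows essentially the same approach as the paper's: both arguments show that $M=\mathcal{L}_\omega(M)$ by picking an element $x$ whose image in $M/\mathcal{L}_\omega(M)$ generates a simple module (so $Jx\subseteq\mathcal{L}_\omega(M)$), then using finite generation of $J$ to force $Jx\subseteq\mathcal{L}_n(M)$ for a single $n$, whence $x\in\mathcal{L}_{n+1}(M)$, a contradiction; finite generation of $M$ then yields finite Loewy length. The only cosmetic difference is that the paper first reduces to the cyclic case $M=Ax$ and phrases the setup as ``Loewy length $>\omega$'', whereas you work directly with a finitely generated $M$ and argue via the quotient $M/L_\omega$.
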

\begin{proof}
It is enough to consider $M=Ax$ for some $x\in M$. If the Loewy length of $M$ is strictly greater than $\omega$, the first infinite ordinal, then there is $f\in A$ such that $fx\in \Ll_{\omega+1}(M)\setminus\Ll_\omega(M)$ so $fx+\Ll_\omega(M)/\Ll_\omega(M)$ is semisimple and then it is annihilated by $J$, (since $A/J$ is finite dimensional semisimple). Then $Jfx\subseteq\Ll_\omega(M)=\bigcup\limits_{n}\Ll_n(M)$. Let $g_1,\dots,g_s$ generate $J$ on the left; then one has $g_i\cdot fx\in\Ll_\omega(M)$ and therefore $g_i\cdot fx\in\Ll_{n_i}(M)$ for some $n_i$. This shows that $Jfx\subseteq \Ll_n(M)$ with $n={\rm max}\{n_1,\dots,n_s\}$ (as $J$ is generated on the left by the $g_i$'s). Then, since $J^n$ cancels $\Ll_n(M)$, we have that $J^{n+1}fx=J^n\cdot Jfx\subseteq J^n\Ll_n(M)=0$. Therefore, we get that $fx\in\Ll_{n+1}(M)\subseteq\Ll_\omega(M)$, a contradiction. \\
Now, note that we have $Ax=M=\Ll_\omega(M)=\bigcup\limits_{n}\Ll_n(M)$ and therefore $x\in\Ll_n(M)$ for some $n$, so $Ax\subseteq \Ll_n(M)$, and the proof is finished.
\end{proof}

\begin{corollary}
Let $C$ be a coalgebra such that $C^*$ is left semiartinian; if $J$ is finitely generated to the left, then $C$ has finite coradical filtration.
\end{corollary}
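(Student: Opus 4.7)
The plan is to apply Lemma \ref{l.1} to the cyclic left $A$-module ${}_AA$, which is semiartinian by hypothesis, and then translate the resulting bound on its Loewy length into a bound on the coradical filtration of $C$ through the standard duality between powers of $J$ and terms of the coradical filtration.

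First I would check that the hypotheses of Lemma \ref{l.1} are in force. The condition that $J$ is finitely generated on the left is given, so it only remains to verify that $C$ is almost connected. Since $A$ is left semiartinian, every left $A$-module $M$ satisfies $\Tt(M)=M$, which is trivially a direct summand of $M$; hence $A$ has the Dickson splitting property (even on cyclic modules), and Corollary \ref{p.1} forces $C_0$ to be finite dimensional.

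Next I would apply Lemma \ref{l.1} to ${}_AA=A\cdot 1$ to obtain a finite integer $n$ with $\Ll_n(A)=A$. Using Remark \ref{r.J}, an easy induction identifies $\Ll_k(A)=\{a\in A\mid J^k a=0\}$: the case $k=0$ is trivial, and in the inductive step $\mathrm{soc}(A/\Ll_k(A))$ coincides with the submodule annihilated by $J$ (because $A/J$ is semisimple), so $a\in\Ll_{k+1}(A)$ iff $Ja\subseteq\Ll_k(A)$ iff $J^{k+1}a=0$. Evaluating at $a=1\in A$ yields $J^n=0$.

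Finally, using the orthogonality $C_m^\perp=J^{m+1}$ already exploited in Lemma \ref{Dom.l.3}, the vanishing $J^n=0$ translates into $C_{n-1}^\perp=0$, i.e.\ $C=C_{n-1}$, so the coradical filtration of $C$ terminates. I do not expect any real obstacle in this argument: Lemma \ref{l.1} is doing essentially all the work, and what remains is routine bookkeeping with the socle series and the standard coalgebra/dual-algebra perpendicularity; the only point to watch is the index convention $C_m^\perp=J^{m+1}$, which is the one used consistently throughout the paper.
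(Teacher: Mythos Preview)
Your proposal is correct and is precisely the argument the paper has in mind: the Corollary is stated there without proof as an immediate consequence of Lemma~\ref{l.1}, and you have simply spelled out the routine details (verifying almost-connectedness via Corollary~\ref{p.1}, applying Lemma~\ref{l.1} to the cyclic module ${}_AA$, and translating $J^n=0$ into $C_{n-1}=C$). One cosmetic point: the orthogonality you need is cleaner in the direction $(J^{n})^\perp=C_{n-1}$ (which is exactly what the paper uses in Lemma~\ref{Dom.l.3}), and from $J^n=0$ this gives $C_{n-1}=0^\perp=C$ directly, sidestepping any worry about whether $J^n$ is closed.
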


\begin{proposition}
If $C$ is an almost connected coalgebra with finite coradical filtration and such that the Jacobson radical $J$ of $C^*$ is finitely generated (on the left), then $C$ is finite dimensional.
\end{proposition}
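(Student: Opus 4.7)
The plan is to pass to the dual algebra $A = C^*$ and show that $A$ is finite-dimensional; since the canonical map $C \hookrightarrow C^{**} = A^*$ is injective, $\dim A < \infty$ forces $\dim C < \infty$. The hypotheses translate cleanly into ring-theoretic data: almost connectedness gives $A/J \simeq C_0^*$ finite-dimensional, while finite coradical filtration $C = C_n$ gives $J^{n+1} = C_n^\perp = 0$, so $J$ is nilpotent. The statement thereby reduces to the purely ring-theoretic assertion: an algebra $A$ with nilpotent Jacobson radical, finite-dimensional semisimple quotient $A/J$, and $J$ finitely generated as a left ideal must be finite-dimensional.

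The main step is a short induction on $k\geq 0$ showing that each successive quotient $J^k/J^{k+1}$ of the $J$-adic filtration is finite-dimensional. The case $k = 0$ is the hypothesis. For $k = 1$, finite generation of $J$ as a left $A$-module makes $J/J^2$ finitely generated as a left $A/J$-module, and since $A/J$ is finite-dimensional, so is $J/J^2$. For the inductive step, $J^{k-1}/J^k$ carries a natural right $A/J$-module structure (as $J^{k-1}\cdot J \subseteq J^k$, the right $J$-action vanishes) and $J/J^2$ a left $A/J$-module structure, and multiplication in $A$ descends to an $A/J$-balanced surjection
$$J^{k-1}/J^k \,\otimes_{A/J}\, J/J^2 \twoheadrightarrow J^k/J^{k+1}, \quad (a + J^k)\otimes(b + J^2) \;\mapsto\; ab + J^{k+1}.$$
Well-definedness is immediate because changing $a$ by $J^k$ or $b$ by $J^2$ shifts $ab$ only within $J^{k+1}$; balancedness holds because any $r \in J$ sends $a\cdot r$ into $J^k$ and $r\cdot b$ into $J^2$; and surjectivity is immediate from $J^k = J^{k-1}J$. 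By the inductive hypothesis and the $k=1$ case both tensor factors are finite-dimensional, hence so is their tensor product, and therefore $J^k/J^{k+1}$.

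With $J^{n+1} = 0$, the filtration $A \supset J \supset J^2 \supset \cdots \supset J^{n+1} = 0$ has finitely many steps, all with finite-dimensional quotients, so $A$ and therefore $C$ are finite-dimensional. I do not anticipate a genuine obstacle here: the substantive work is in the earlier sections, and this proposition is essentially a clean piece of bookkeeping pitting the finite generation of $J$ against the nilpotence of $J$ together with the finite-dimensionality of $A/J$.
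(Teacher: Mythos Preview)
Your proof is correct and follows essentially the same approach as the paper: both translate the hypotheses into $A/J$ finite-dimensional and $J$ nilpotent, then climb the $J$-adic filtration showing each $J^k/J^{k+1}$ is finite-dimensional. The only cosmetic difference is that the paper shows each power $J^n$ is itself finitely generated as a left $A$-module (if $J$ is generated by $f_1,\dots,f_k$ and $M$ by $m_1,\dots,m_s$, then $JM$ is generated by the $f_im_j$), whereas you package the same multiplication map as the tensor-product surjection $J^{k-1}/J^k \otimes_{A/J} J/J^2 \twoheadrightarrow J^k/J^{k+1}$.
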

\begin{proof}
Let $\{f_1,\dots,f_k\}$ be a set of generators of $J$. If $M$ is a finitely generated left $C^*$-module, say by $m_1,\dots,m_s$, then $JM$ is also finitely generated, by $\{f_im_j\}$. Indeed if $a\in J$, $m\in M$ then $m=\sum\limits_{j=1}^sa_jm_j$, $a_j\in A$ and since  $aa_j\in J$ we get $aa_j=\sum\limits_{i=1}^kb_{ij}f_i$. Therefore $am=\sum\limits_{j=1}^saa_jm_j=\sum\limits_{i=1}^k\sum\limits_{j=1}^sb_{ij}(f_im_j)$. Since $JM$ is generated by the elements of the form $am$, $a\in J$, $m\in M$, the claim follows. Proceeding inductively, this shows that $J^n$ is finitely generated, for all $n$. Then $J^n/J^{n+1}$ is finitely generated and semisimple (because $A/J$ is semisimple), thus it is finite dimensional. Therefore, inductively it follows that $A/J^n$ is finite dimensional. Finally, since $C_n=C$ for some $n$, $J^n=0$ so $A=C^*$ is finite dimensional.
\end{proof}

\begin{corollary}
If $A$ is a pseudo-compact algebra which is left semiartinian and $J(A)$ is finitely generated to the left (for example if $A$ is left noetherian), then $A$ is finite dimensional.
\end{corollary}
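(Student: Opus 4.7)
The plan is to string together the three immediately preceding results, since all the hard work has already been done; the only thing to verify is that the hypothesis ``$C$ is almost connected'' is automatically satisfied in this setting.

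First, I would observe that if $A$ is left semiartinian, then every left $A$-module $M$ lies in the Dickson subcategory, so $\mathcal{T}(M) = M$ and trivially splits off (as $M = M \oplus 0$). In particular, the hypothesis of Corollary \ref{p.1} holds, which immediately gives that $\mathcal{S}$ is finite and $C_0$ is finite dimensional. Thus $C$ is almost connected.

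Next, with $C$ almost connected, $C^*$ left semiartinian, and $J(A)$ finitely generated on the left, the Corollary preceding the statement applies and yields that the coradical filtration of $C$ is finite, i.e.\ $C_n = C$ for some $n$.

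Finally, the Proposition just before the statement, which requires exactly ``$C$ almost connected, finite coradical filtration, and $J$ left finitely generated,'' applies and gives that $C$ is finite dimensional. Hence $A = C^*$ is finite dimensional. For the parenthetical remark about the left noetherian case, I would just note that in a left noetherian ring every left ideal, and in particular $J$, is finitely generated.

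There is no real obstacle here: the whole content is to notice that ``$A$ semiartinian'' forces the Dickson splitting property (trivially) and thus feeds back into Corollary \ref{p.1} to produce almost-connectedness of $C$, after which the two previously established results chain together without further work.
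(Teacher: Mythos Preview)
Your proposal is correct and is essentially the argument the paper has in mind; the corollary is stated without proof in the paper precisely because it is meant to follow immediately from chaining the preceding Lemma, Corollary, and Proposition. You correctly isolate the one point that is not entirely automatic---namely that $C$ must be almost connected---and your derivation of it via Corollary~\ref{p.1} (using that semiartinian trivially implies the Dickson splitting hypothesis) is exactly right and matches the spirit of the paper; alternatively one could argue directly from Proposition~\ref{p.0} that $A/J$ semiartinian forces $\Sigma=\Pi$ and hence $\Ss$ finite, but this amounts to the same thing.
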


\begin{remark}
Naturally, the fact that $C^*$ is left semiartinian or even semiartinian of finite Loewy length (i.e. $C=C_n$ for some $n$) does not imply the finite dimensionality of $C$ (thus the result is of a completely different nature of that in \cite{NT} and \cite{I1}). Indeed, consider the coalgebra $C$ with basis $\{g\}\cup\{x_i,i\in I\}$ for an infinite set $I$ and comultiplication given by $g\mapsto g\otimes g$ and $x_i\mapsto x_i\otimes g+g\otimes x_i$ and counit $\varepsilon(g)=1$, $\varepsilon(x_i)=0$. Then $C_0=<g>$ and $C_1=C$, but $C$ is infinite dimensional.
\end{remark}

\begin{remark}
Thus the "Dickson Splitting conjecture" holds for the class of pseudocompact (profinite) algebras, which is the same as the class of algebras that are the dual of some coalgebra. As seen from above, in some situations it even follows that the algebra $A(=C^*)$ has finite Loewy length: if the Jacobson radical is finitely generated or if the algebra is a domain (in fact, even more follows in each of these cases). Then the following question naturally arises: if $C$ is a coalgebra such that $C^*$ is left semiartinian, does it follow that $C^*$ has finite Loewy length, equivalently, does $C$ have finite coradical filtration? At the same time, one can ask the question of whether $C^*$ - left semiartinian also implies $C^*$ - right semiartinian.
\end{remark}





\bigskip\bigskip\bigskip\bigskip

\begin{center}
\sc Acknowledgment
\end{center}
The authors would like to thank the referee for a thorough report on the paper which brought very useful observations, as well as for his/her suggestions on Theorem \ref{T.1} providing an ideea that simplified the original proof of this theorem.


\vspace*{3mm} 
\begin{flushright}
\begin{minipage}{148mm}\sc\footnotesize

Miodrag Cristian Iovanov\\
University of Bucharest, Faculty of Mathematics, Str.
Academiei 14,
RO-70109, Bucharest, Romania \&\\
State University of New York - Buffalo, 244 Mathematics Building, Buffalo NY, 14260-2900, USA\\
{\it E--mail address}: {\tt
yovanov@gmail.com}\vspace*{3mm}

Constantin N\u ast\u asescu\\
University of Bucharest, Faculty of Mathematics, Str.
Academiei 14,
RO-70109, Bucharest, Romania\\
\& UAL, Almeria,  Spain\\
{\it E--mail address}: {\tt 
constant@ual.es}\vspace*{3mm}

Blass Torrecillas Jover\\
UAL, Almeria,  Spain\\
{\it E--mail address}: {\tt
btorreci@ual.es}\vspace*{3mm}

\end{minipage}
\end{flushright}
\end{document}